\providecommand{\algorithmname}{Algorithm}
\newtheorem{theorem}{Theorem}[section]
\newtheorem{lem}{Lemma}[section]
\newtheorem{rem}{Remark}[section]
\newtheorem{prop}{Proposition}[section]
\newcounter{hypA}
\newenvironment{hypA}{\refstepcounter{hypA}\begin{itemize}
  \item[({\bf A\arabic{hypA}})]}{\end{itemize}}
\newcounter{hypB}
\newcounter{hypD}
\date{}
\newcommand{\bbE}{\mathbb{E}}
\begin{document}

\begin{center}

{\Large \textbf{Unbiased Filtering of a Class of Partially Observed Diffusions}}

\vspace{0.5cm}

BY AJAY JASRA$^{1}$, KODY J.~H. LAW$^{2}$ \& FANGYUAN YU$^{1}$

{\footnotesize $^{1}$Computer, Electrical and Mathematical Sciences and Engineering Division, King Abdullah University of Science and Technology, Thuwal, 23955, KSA.}
{\footnotesize E-Mail:\,} \texttt{\emph{\footnotesize ajay.jasra@kaust.edu.sa, fangyuan.yu@kaust.edu.sa}}\\
{\footnotesize $^{2}$School of Mathematics,
University of Manchester, Manchester, M13 9PL, UK.}
{\footnotesize E-Mail:\,} \texttt{\emph{\footnotesize kodylaw@gmail.com}}

\end{center}

\begin{abstract}
In this article we consider a Monte Carlo-based method to filter partially observed diffusions observed at regular and discrete times.
Given access only to Euler discretizations of the diffusion process, 
we present a new procedure which can return online estimates of the 
filtering distribution 
with no time discretization bias and 
finite variance. Our approach is based upon a novel double application
of the randomization methods of \cite{rhee} along with the multilevel particle filter (MLPF) approach of \cite{mlpf}. 
A numerical comparison of our
new approach with the MLPF, on a single processor,
shows that similar errors are possible for a mild increase in computational cost. 
However, the new method scales strongly to arbitrarily many processors.  \\
\noindent \textbf{Key words}: Partially Observed Diffusions; Randomization Methods; Multilevel Monte Carlo.
\end{abstract}

\section{Introduction}

We consider the problem of estimating 
a hidden diffusion process, 
given access to only discrete 
time observations. 
It is assumed that the observation process is independent of all other random variables
when conditioned on the hidden process at a given time.
Such a model is often termed a hidden Markov or state-space model in the literature (e.g.~\cite{cappe}) and has many real applications in engineering,
finance and economics. 

We are particularly concerned in the filtering problem:
estimating the diffusion process online, that is, recursively in time as data arrive.
Particle filters (PFs)  
are numerical methods which can provide exact approximations 
(consistent in the Monte Carlo sample size) of filtering distributions associated to 
state space models,
with a fixed computational cost per observation time
(see e.g.~\cite{delm:13} and
the references therein). 
The method can sometimes provide errors that are uniform in time (e.g.~\cite{delm:13})
and is most effective when the hidden state is in moderate
dimension (1-15).
In the context of diffusions, the problem is even more challenging than usual, 
because the transition density of the diffusion process is seldom available up-to a non-negative
and unbiased estimator, which precludes 
the use of exact simulation methods such as in \cite{fearn}. 
As a result, it is common to 
adopt 
a time-discretization of the diffusion process, 
for instance using the Euler method, 
and perform inference using a PF with this biased model. 
This 
approach can be further enhanced by using a PF version of the popular
multilevel Monte Carlo (MLMC) method of \cite{giles,heinrich}, 
called the multilevel particle filter (MLPF); see e.g.~\cite{mlpf_new,cpf_clt,mlpf}. 
The basic notion of this methodology is to introduce a hierarchy of time-discretized 
filters and 
a collapsing sum representation of the most precise time-discretization, 
and then to approximate the representation using independent coupled
particle filters (CPFs). 
Using this approach, the cost to achieve a given mean square
error (MSE) can be reduced quite significantly relative to a single level strategy,
under appropriate assumptions. 
However, we note that this method will still produce estimates with 
a bias from the most precise time-discretization.

The objective of the present article is to develop a technique that can remove the time-discretization bias from filtering, 
even when we cannot sample from the exact unobserved
diffusion and we do not have access to an estimate of the transition density 
which is non-negative and unbiased (as in \cite{fearn}). 
The approach we follow is to consider randomization
schemes as for instance found in \cite{mcl,rhee,vihola}.
In the context of estimating a class of expectations w.r.t.~laws of diffusion processes, 
\cite{rhee} show how to obtain
unbiased estimates with only access to 
time-discretized approximations of the diffusion process. 
In terms of cost to obtain
a given MSE (variance), it can perform better or worse than the MLMC method, 
depending on the context; 
the improvement depends upon both the time discretization 
and some underlying properties of the diffusion of interest.
This approach cannot be routinely extended to the filtering of diffusion processes 
because one cannot in general obtain independent and online (fixed computational cost per observation time step) samples even from the discrete-time approximations. 
Here we develop a novel double
randomization scheme, based upon that in \cite{rhee}, 
which uses the MLPF methodology above to yield unbiased and finite variance estimators of the filter. Moreover, our method is
intrinsically parallelizable and has a computational cost that is comparable to the MLPF.  This latter point is investigated both mathematically and empirically with numerical simulations. 
To the best of our knowledge, this is one of the first methods to achieve unbiasedness, when exact simulations of diffusions may not be possible.
Moreover, the assumptions on the diffusion which are actually required to implement the procedure are minimal.

This article is structured as follows. In Section \ref{sec:app} we present our approach along with the details of the problem of interest. Alternative approaches are also discussed.
In Section \ref{sec:theory} we demonstrate that our estimate is both unbiased and of finite variance. 
In Section \ref{sec:numerics} our numerical results are presented. The proofs of our results in Sections \ref{sec:app} and \ref{sec:theory} are given in Appendix \ref{sec:proofs}.

\section{Approach}\label{sec:app}

\subsection{Notations}

Let $(\mathsf{X},\mathcal{X})$ be a measurable space.
For $\varphi:\mathsf{X}\rightarrow\mathbb{R}$ we write $\mathcal{B}_b(\mathsf{X})$, $\mathcal{C}^2(\mathsf{X})$
and $\textrm{Lip}(\mathsf{X})$ as the collection of bounded measurable, continuous, twice-differentiable and Lipschitz (finite Lipschitz constants) functions respectively.
If $\mathsf{X}\subseteq\mathbb{R}^d$ then the metric for $\varphi\in \textrm{Lip}(\mathsf{X})$ is $L_2$ (i.e.~$|\varphi(x)-\varphi(x')|\leq C \| x - x' \|$ and $\| \cdot \|$ is the $L_2-$norm).
For $\varphi\in\mathcal{B}_b(\mathsf{X})$, we write the supremum norm $\|\varphi\|_{\infty}=\sup_{x\in\mathsf{X}}|\varphi(x)|$.
$\mathcal{P}(\mathsf{X})$  denotes the collection of probability measures on $(\mathsf{X},\mathcal{X})$.
For a finite measure $\mu$ on $(\mathsf{X},\mathcal{X})$
and a $\varphi\in\mathcal{B}_b(\mathsf{X})$, the notation $\mu(\varphi)=\int_{\mathsf{X}}\varphi(x)\mu(dx)$ is used.
For $(\mathsf{X}\times\mathsf{Y},\mathcal{X}\vee\mathcal{Y})$ a measurable space and $\mu$ a non-negative finite measure on this space,
we use the tensor-product of function notations for $(\varphi,\psi)\in\mathcal{B}_b(\mathsf{X})\times\mathcal{B}_b(\mathsf{X})$,
$\mu(\varphi\otimes\psi)=\int_{\mathsf{X}\times\mathsf{Y}}\varphi(x)\psi(x')\mu(d(x,x'))$,
where the notation $d(x,x')=du$ is used for $u=(x,x')$.
Let $K:\mathsf{X}\times\mathcal{X}\rightarrow[0,a]$, $0<a<+\infty$ be a non-negative kernel and $\mu$ be a measure then we use the notations
$
\mu K(dx') = \int_{\mathsf{X}}\mu(dx) K(x,dx')
$
and for $\varphi\in\mathcal{B}_b(\mathsf{X})$, 
$
K(\varphi)(x) = \int_{\mathsf{X}} \varphi(x') K(x,dx').
$
For $A\in\mathcal{X}$ the indicator is written $\mathbb{I}_A(x)$. $\mathbb{Z}^+$ are the non-negative integers. 
$\mathcal{N}(\mu,\sigma^2)$ 
denotes a one dimensional Gaussian distribution of mean $\mu$ and variance $\sigma^2$. 
$\mathcal{L}(\mu,\sigma)$ denotes a Laplace distribution with location $\mu$ and scale $\sigma$.

\subsection{General Problem}\label{sec:gen}

We begin by presenting the idea at a very high level, which will help to motivate the methodology to be described in the context of partially observed diffusions.
Let $\eta\in\mathcal{P}(\mathsf{X})$ be a probability measure of interest. We assume that we only are able to work with biased versions of $\eta$, 
for example arising from discretization with time-step 
$\Delta_l$. 
In particular, consider
$\{\eta^l\}_{l\in\mathbb{Z}^+}$, $\eta^l\in\mathcal{P}(\mathsf{X})$ such that for any $\varphi\in\mathcal{B}_b(\mathsf{X})$: 
\begin{equation}\label{eq:weak_error_gen}
\lim_{l\rightarrow\infty}\eta^l(\varphi)=\eta(\varphi).
\end{equation}

Our objective is to introduce a Monte Carlo method that can deliver unbiased and finite variance estimates of $\eta(\varphi)$, which in principle could
be achieved in the following manner, using randomization approaches (e.g.~\cite{rhee}, see also \cite[Theorem 3]{vihola}). 
Suppose that one can produce
a sequence of independent random variables $(\Xi_l)_{l\in\mathbb{Z}^+}$ such that 
\begin{equation}\label{eq:xi_cond}
\mathbb{E}[\Xi_l] = 
\eta^l(\varphi) - \eta^{l-1}(\varphi) \, .
\end{equation}
where  $\eta^{-1}(\varphi):=0$.
Let $\mathbb{P}_L(l)$ be a positive probability mass function on $\mathbb{Z}^+$. 
Sample $L$ from $\mathbb{P}_L$ and consider the estimate 
\begin{equation}\label{eq:eta_singleterm}
\widehat{\eta(\varphi)}_1 = 
\frac{\Xi_L}{\mathbb{P}_L(L)} \, .
\end{equation}
By \cite[Theorem 3]{vihola}, $\widehat{\eta(\varphi)}_1$ is an unbiased and finite variance estimator of $\eta(\varphi)$ if
\begin{equation}\label{eq:ub_fv_cond}
\sum_{l\in\mathbb{Z}^+}\frac{1}{\mathbb{P}_L(l)}\mathbb{E}[\Xi_l^2] < +\infty \, .
\end{equation}
Notice the very important and very powerful fact 
that once we can obtain \eqref{eq:eta_singleterm}, 
we can construct independent and identically distributed (i.i.d.) 
samples in parallel by sampling $L_i$ independently from $\mathbb{P}_L$ for 
$i\in\{1,\dots, M\}$. 
From these samples we are able to construct an unbiased estimator with 
$\mathcal{O}(M^{-1})$ mean squared error 
(assuming condition \eqref{eq:ub_fv_cond} holds) as follows
$$
\widehat{\eta(\varphi)} = \frac1M \sum_{i=1}^M \frac{\Xi_{L_i}}{\mathbb{P}_L(L_{i})} \, .
$$

The main challenge in our application is to deliver an algorithm which can provide unbiased estimates of $\eta^l(\varphi)$, $(\Xi_l)_{l\in\mathbb{Z}^+}$ and $\mathbb{P}_L$ such that \eqref{eq:xi_cond} and \eqref{eq:ub_fv_cond} are satisfied. We will construct such a method in our particular problem of interest.

\subsection{Partially Observed Diffusions}

The following presentation follows \cite{mlpf}.
We start with a diffusion process:
\begin{eqnarray}
dZ_t & = & a(Z_t)dt + b(Z_t)dW_t \, ,
\label{eq:sde}
\end{eqnarray}
with $Z_t\in\mathbb{R}^d=\mathsf{X}$, 
$a:\mathbb{R}^d\rightarrow\mathbb{R}^d$ ($j$th element denoted $a^j$), $b:\mathbb{R}^d\rightarrow\mathbb{R}^{d\times d}$ ($(j,k)$th element denoted $b^{j,k}$), 
$t\geq 0$ and $\{W_t\}_{t\geq 0}$ a $d-$dimensional Brownian motion. 
The following assumptions are made throughout the article. We set $Z_0=x^*\in\mathsf{X}$.

\begin{quote} 
The coefficients $a^j, b^{j,k} \in \mathcal{C}^2(\mathsf{X})$, for $j,k= 1,\ldots, d$. 
Also, $a$ and $b$ satisfy 
\begin{itemize}
\item[(i)] {\bf uniform ellipticity}: $b(z)b(z)^T$ is uniformly positive definite 
over $z\in \mathsf{X}$;
\item[(ii)] {\bf globally Lipschitz}:
there is a $C>0$ such that 
$|a^j(z)-a^j(z')|+|b^{j,k}(z)-b^{j,k}(z')| \leq C |z-z'|$ 
for all $(z,z') \in \mathsf{X}\times\mathsf{X}$, $(j,k)\in\{1,\dots,d\}^2$. 
\end{itemize}
\end{quote}
It is remarked that these assumptions are made for mathematical convenience. In general, all one really requires from a numerical perspective,
is the existence of the solution of the stochastic differential equation. However, one must note that the numerical performance will be affected by the
properties of the diffusion process.

The data are observed at regular unit time-intervals (i.e.~in discrete time) 
$(y_1,y_2,\dots)$, where $y_k \in \mathsf{Y}$.
It is assumed that conditional on $Z_{k}$, 
$Y_k$ is independent of all other random variables with strictly positive density $G(z_{k},y_k)$. Let $M(z,z')$  be the transition density (assumed to exist) of the diffusion process (over unit time)
and consider a discrete-time Markov chain $(X_0,X_1,\dots)$ 
with initial distribution $M(x^*,\cdot)$ and transition density $M(x,\cdot)$. Here we are
creating a discrete-time Markov chain that corresponds to the discrete-time skeleton of the diffusion process at a time lag of 1.
Since we condition on a given realization of observations $(y_1,y_2,\dots)$,
we will henceforth write $G_{k}(x_{k})$ instead of $G(x_{k},y_{k+1})$, 
and it is assumed that $G_{k}\in\mathcal{B}_b(\mathsf{X})$ for every $k\geq 0$.
Then we define, for $B\in\mathcal{X}$
$$
\gamma_{n}(B) := \int_{\mathsf{X}^{n+1}}\mathbb{I}_B(x_n)\Big(\prod_{p=0}^{n-1} G_{p}(x_{p})\Big)M(x^*,x_{0})\prod_{p=1}^n M(x_{p-1},x_{p}) dx_{0:n}.
$$
The predictor is $\eta_{n}(B)=\gamma_{n}(B)/\gamma_{n}(1)$ which corresponds
to the distribution associated to $X_{n} := Z_{n}|y_1,\dots,y_{n-1}$. The filter is
$$
\bar{\eta}_n(B) = \frac{\eta_n(G_n\mathbb{I}_B)}{\eta_n(G_n)}.
$$

Consider an Euler discretization of the diffusion
with discretization $\Delta_l=2^{-l}$, $l\geq 0$ and write the associated transition over unit time as $M^l(x,x')$. Then we define, for $B\in\mathcal{X}$
$$
\gamma_{n}^l(B) := \int_{\mathsf{X}^{n+1}}\mathbb{I}_B(x_n)\Big(\prod_{p=0}^{n-1} G_{p}(x_{p})\Big)M^l(x^*,x_{0})\prod_{p=1}^n M^l(x_{p-1},x_{p}) dx_{0:n}.
$$
The predictor is $\eta_{n}^l(B)=\gamma_{n}^l(B)/\gamma_{n}^l(1)$ which corresponds
to the Euler approximation $X_n^{l}$
of the distribution associated to $Z_{n}|y_1,\dots,y_{n-1}$. The filter is
$$
\bar{\eta}_n^l(B) = \frac{\eta_n^l(G_n\mathbb{I}_B)}{\eta_n^l(G_n)}.
$$

Let $l\in\mathbb{N}$.
Throughout the article, it is assumed that there exists a Markov kernel 
$\check{M}^l:\mathsf{X}\times\mathsf{X}\rightarrow\mathcal{P}(\mathsf{X}\times\mathsf{X})$
such that for any $B\in\mathcal{X}$, $(x,x')\in\mathsf{X}\times\mathsf{X}$:
$$
\check{M}^l(B\times \mathsf{X})(x,x') = M^l(B)(x) \qquad \check{M}^l(\mathsf{X}\times B)(x,x') = M^{l-1}(B)(x').
$$
This can be done by the coupling scheme used in \cite{mlpf} for example and is the one used in this article. 
We will use the notation $\check{\eta}_0^l(d(x,x'))$ 
as an exchangeable notation for $\check{M}^l((x^*,x^*), d(x,x'))$.

We note the following (the proof is in Appendix \ref{app:disc_conv}), 
which verifies that \eqref{eq:weak_error_gen} will hold in our context.
\begin{prop}\label{prop:euler_pred_conv}
For any $n\in\mathbb{Z}^+$, $\varphi\in\mathcal{B}_b(\mathsf{X})$
$$
\lim_{l\rightarrow\infty}\bar{\eta}_n^l(\varphi)=\bar{\eta}_n(\varphi).
$$
\end{prop}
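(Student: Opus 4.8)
The plan is to pass from the normalized filter to the unnormalized Feynman--Kac measures and prove the pointwise-in-$l$ convergence $\gamma_n^l(\phi)\to\gamma_n(\phi)$ for every $\phi\in\mathcal{B}_b(\mathsf{X})$. Since
\[
\bar{\eta}_n^l(\varphi)=\frac{\gamma_n^l(G_n\varphi)}{\gamma_n^l(G_n)},
\]
once such convergence is established for the two bounded measurable functions $\phi=G_n\varphi$ and $\phi=G_n$, the claim follows by taking the quotient: the limit denominator $\gamma_n(G_n)$ is strictly positive because $G$ and the transition densities are strictly positive, so the denominators $\gamma_n^l(G_n)$ are bounded away from zero for all large $l$ and the ratio of limits is the limit of ratios.

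To organize the convergence of the unnormalized quantities I would write $\gamma_n^l$ in nested operator form. Setting $\gamma_0^l(\phi)=M^l(\phi)(x^*)$ and using the recursion $\gamma_k^l(\phi)=\gamma_{k-1}^l\big(G_{k-1}M^l(\phi)\big)$, one obtains
\[
\gamma_n^l(\phi)=M^l\Big(G_0\,M^l\big(G_1\cdots M^l(G_{n-1}M^l(\phi))\cdots\big)\Big)(x^*),
\]
with the analogous identity for $\gamma_n(\phi)$ upon replacing every $M^l$ by $M$. The problem thus reduces to showing that this finite composition of the operators $\phi\mapsto M^l(G_p\,\cdot\,)$ converges to the corresponding composition built from $M$.

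The single analytic input I would rely on is convergence of the Euler transition to the true transition in total variation: for each fixed $x\in\mathsf{X}$,
\[
\lim_{l\to\infty}\big\|M^l(x,\cdot)-M(x,\cdot)\big\|_{\mathrm{tv}}=0.
\]
Under the standing assumptions --- uniform ellipticity together with $\mathcal{C}^2$, globally Lipschitz coefficients --- the Euler scheme admits transition densities converging to the true (elliptic, hence existing and strictly positive) transition density, and total-variation convergence then follows by Scheff\'e's lemma. This is where the model hypotheses are genuinely used, and I expect establishing this estimate rigorously (or citing the appropriate weak-error and density results) to be the main obstacle; I would isolate it as a preliminary lemma. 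Note that total variation, rather than mere weak convergence, is what is needed here, since the factors $G_p$ and the test function $\varphi$ are only assumed bounded measurable and need not be continuous.

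Granting the total-variation estimate, the composition is handled by a backward induction on the stages $p=n-1,\dots,0$. The inductive claim is that if $\phi_l\to\phi$ pointwise with $\sup_l\|\phi_l\|_{\infty}<\infty$, then $M^l(\phi_l)(x)\to M(\phi)(x)$ for every $x$ and the resulting sequence is again uniformly bounded. The uniform bound is immediate from $\|M^l(\phi_l)\|_{\infty}\le\|\phi_l\|_{\infty}$, and the pointwise limit follows from the decomposition
\[
M^l(\phi_l)(x)-M(\phi)(x)=\big(M^l(\phi_l)(x)-M(\phi_l)(x)\big)+M(\phi_l-\phi)(x),
\]
where the first term is bounded by $\|\phi_l\|_{\infty}\,\|M^l(x,\cdot)-M(x,\cdot)\|_{\mathrm{tv}}\to0$ and the second vanishes by dominated convergence against the fixed measure $M(x,\cdot)$. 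Multiplying by the fixed bounded factor $G_p$ preserves both the pointwise convergence and the uniform boundedness, so the induction runs through all $n$ stages and yields $\gamma_n^l(\phi)\to\gamma_n(\phi)$; evaluating at $\phi=G_n\varphi$ and $\phi=G_n$ and taking the quotient completes the argument. The genuine difficulty throughout is the double dependence on $l$ of both integrand and integrating measure, which the total-variation bound is precisely designed to overcome.
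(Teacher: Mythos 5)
Your proposal is correct, but it takes a genuinely different route from the paper's. The paper proves convergence of the \emph{normalized} predictors $\eta_n^l$ by forward induction on $n$: at each step it splits $[\eta_n^l-\eta_n](\varphi)$ into three terms ($T_1+T_2+T_3$) that separately isolate the one-step discretization error and the propagation of the error from time $n-1$ through the normalizing constants, and it then deduces the filter case at the end. Its single analytic input is the weak-error estimate of Del Moral--Jacod--Protter, $|[M^l-M](\varphi)(x)|\leq C\Delta_l\|\varphi\|_{\infty}$ for bounded measurable $\varphi$, which is used \emph{uniformly in $x$} (this uniformity is genuinely needed there, because the $T_1$ term integrates the one-step error against the $l$-dependent measure $\eta_{n-1}^l$). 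You instead work with the unnormalized measures $\gamma_n^l$, run a backward induction through the operator composition, and divide only once at the end; your decomposition is arranged so that $l$-dependent integrands always meet the fixed kernel $M(x,\cdot)$ (handled by dominated convergence) and $l$-dependent kernels are always evaluated at a fixed point (handled by pointwise-in-$x$ total-variation convergence). This buys you a weaker required input --- TV convergence at each fixed $x$, with no rate and no uniformity --- and a cleaner treatment of normalization; note also that the cited Del Moral--Jacod--Protter bound is itself exactly a uniform, rated version of your TV statement, so you could cite it in place of the density-convergence-plus-Scheff\'e argument you flag as the main obstacle, rather than invoking separate Euler density estimates. What the paper's route buys in exchange is a decomposition whose structure (error splitting across normalized Feynman--Kac updates) matches the stability arguments used in its other proofs, and which would yield an explicit rate in $\Delta_l$ at each $n$ if one tracked the constants, whereas your dominated-convergence step is intrinsically rate-free.
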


\subsection{Strategy}

We will now detail how one can obtain $(\Xi_l)_{l\in\mathbb{Z}^+}$ (as in Section \ref{sec:gen}) via unbiased estimates of $\bar{\eta}_n^0(\varphi)$ and of $[\bar{\eta}_n^l-\bar{\eta}_n^{l-1}](\varphi)$, $\varphi\in\mathcal{B}_b(\mathsf{X})$ and
$l\in\mathbb{Z}^+$ fixed. This idea is based upon finding
\emph{biased} but consistent (in the Monte Carlo sample size) estimates of $\bar{\eta}_n^0(\varphi)$ and $[\bar{\eta}_n^l-\bar{\eta}_n^{l-1}](\varphi)$.  
Our strategy is as follows.
Let $(N_p)_{p\in\mathbb{Z}^+}$, $N_p\in\mathbb{Z}^+$  be an increasing sequence of positive integers, with $\lim_{p\rightarrow\infty} N_p=\infty$. 
Let $\bar{\eta}_n^{N_p,0}(\varphi)$ (resp.~$[\bar{\eta}_n^{N_p,l}-\bar{\eta}_n^{N_p, l-1}](\varphi)$) be a Monte Carlo (type) estimate of $\bar{\eta}_n^0(\varphi)$ (resp.~$[\bar{\eta}_n^l-\bar{\eta}_n^{l-1}](\varphi)$), of $N_p$ samples.
By the consistency of the approximations, almost surely, we have
$$
\lim_{p\rightarrow\infty}\bar{\eta}_n^{N_p,0}(\varphi) = \bar{\eta}_n^0(\varphi) \, ,
\quad {\rm and} \quad  
\lim_{p\rightarrow\infty}\{[\bar{\eta}_n^{N_p,l}-\bar{\eta}_n^{N_p, l-1}](\varphi)\}=[\bar{\eta}_n^l-\bar{\eta}_n^{l-1}](\varphi)) \, .$$
We do not require that $\mathbb{E}[\bar{\eta}_n^{N_p,0}(\varphi)] = \bar{\eta}_n^0(\varphi)$ or that $\mathbb{E}[[\bar{\eta}_n^{N_p,l}-\bar{\eta}_n^{N_p, l-1}](\varphi)]=\bar{\eta}_n^l(\varphi)-\bar{\eta}_n^{l-1}(\varphi)$ and we use the convention $\bar{\eta}_n^{N_{-1},0}(\varphi):=0$, 
$[\bar{\eta}_n^{N_{-1},l}-\bar{\eta}_n^{N_{-1}, l-1}](\varphi):=0$.
Below we explain how the estimates of $\bar{\eta}_n^{N_p,0}(\varphi)$ and 
$[\bar{\eta}_n^{N_p,l}-\bar{\eta}_n^{N_p, l-1}](\varphi)$ can be used to obtain unbiased 
estimates of $\bar{\eta}_n^0(\varphi)$ and of $[\bar{\eta}_n^l-\bar{\eta}_n^{l-1}](\varphi)$, 
hence the random variables 
$(\Xi_l)_{l\in\mathbb{Z}^+}$ (as in Section \ref{sec:gen}). 
Suppose that one has a positive probability mass function $\mathbb{P}_P(p)$ on $p\in\mathbb{Z}^+$.
Define
$$
\Xi_{l,p} :=  \left\{\begin{array}{ll}
 \frac{1}{\mathbb{P}_P(p)}[\bar{\eta}_n^{N_p,0}-\bar{\eta}_n^{N_{p-1},0}](\varphi) & \textrm{if}~l=0 \\
\frac{1}{\mathbb{P}_P(p)}\Big([\bar{\eta}_n^{N_p,l}-\bar{\eta}_n^{N_p, l-1}](\varphi)  -
[\bar{\eta}_n^{N_{p-1},l}-\bar{\eta}_n^{N_{p-1}, l-1}](\varphi)
\Big)& \textrm{otherwise} \, .
\end{array}\right .
$$
Now set $\Xi_l =  \Xi_{l,P}$, 
where $P$ is sampled according to $\mathbb{P}_P(p)$.
Again using \cite[Theorem 3]{vihola}, we have that
$$
\mathbb{E}[\Xi_l] =  \left\{\begin{array}{ll}
\bar{\eta}_n^{0}(\varphi) & \textrm{if}~l=0 \\
\bar{\eta}_n^{l}(\varphi)-\bar{\eta}_n^{l-1}(\varphi) & \textrm{otherwise} \, .
\end{array}\right.
$$
For each $l\in\mathbb{Z}^+$, $\Xi_l$ has finite variance provided one has
\begin{equation}\label{eq:xipl_cond}
\bbE [\Xi_l^2] = 
\sum_{p\geq 0} \mathbb{P}_P(p) \bbE [\Xi_{l,p}^2 ] <+\infty \, .
\end{equation}
The main objective is now to consider how one can compute 
$\bar{\eta}_n^{N_p,0}(\varphi)$ and $[\bar{\eta}_n^{N_p,l}-\bar{\eta}_n^{N_p, l-1}](\varphi)$. 
We will comment on the various aspects of the approach in Section \ref{sec:algo}.

\subsubsection{Approximating $\bar{\eta}_n^0(\varphi)$}

We will now build a procedure, based upon particle filters, to approximate $\eta_n^0$ which will use $N_0,N_1,\dots$ samples.
First we explain the PF with a fixed number of particles $N$.
The objective of PFs is to recursively in $n$ approximate $\eta_n^0$. Let $n\in\mathbb{N}$, $B\in\mathcal{X}$ and $\mu\in\mathcal{P}(\mathsf{X})$ and define the probability measure:
$$
\Phi_n^0(\mu)(B) = \frac{\mu(G_{n-1}M^0(B))}{\mu(G_{n-1})}.
$$
Note that for any $n\in\mathbb{N}$, $\eta_n^0(B) = \Phi_n^0(\eta_{n-1}^0)(B)$. The PF at time $n$ has law:
$$
\mathbb{P}^N(d(x_0^{1:N,0},\dots,x_n^{1:N,0})) = \Big(\prod_{i=1}^N \eta_0^0(dx_0^{i,0})\Big)\Big(\prod_{k=1}^n\prod_{i=1}^N\Phi_k^0(\eta_{k-1}^{N,0})(dx_k^{i,0})\Big)
$$
where
$$
\eta_{k-1}^{N,0}(dx) = \frac{1}{N}\sum_{i=1}^N \delta_{x_{k-1}^{i,0}}(dx).
$$
The PF is summarized in Algorithm \ref{alg:pf}.

\begin{algorithm}[h!]
\begin{enumerate}
\item{Initialization: For $i\in\{1,\dots,N\}$ sample $X_0^{i,0}$ from $\eta_0^0$. Set $n=1$.}
\item{Resampling and Sampling: For $i\in\{1,\dots,N\}$ sample $X_n^{i,0}$ from $\Phi_n^0(\eta_{n-1}^{N,0})$. 
Set $n=n+1$ and return to the start of 2.}
\end{enumerate}
\caption{A Particle Filter with a fixed number of samples $N\in\mathbb{Z}^+$.}
\label{alg:pf}
\end{algorithm}

To form our approximation of $\bar{\eta}_n^0(\varphi)$ with $N_0$ samples, we run the PF as described above with $N_0$ samples. To form the approximation
with $N_1$ samples, we run a PF independently of the first PF with $N_1-N_0$ samples and so on, for any $p\geq 2$ (i.e.~with $N_2-N_1,\dots, N_{p}-N_{p-1}$ samples).
For a given $p$, the joint law of the simulated samples is
$$
\mathbb{P}_c(d(x_0^{1:N_p,0},\dots,x_n^{1:N_p,0})) = \prod_{q=0}^{p} \mathbb{P}^{N_q-N_{q-1}}(d(x_0^{N_{q-1}+1:N_q,0},\dots,x_n^{N_{q-1}+1:N_q,0}))
$$
where  $N_{-1}:=0$. Expectations w.r.t.~$\mathbb{P}_c$ will be written $\mathbb{E}_c$. 
Define now the notation 
\begin{eqnarray*}
\eta_{n}^{N_{0:p},0}(\varphi) & := & \sum_{q=0}^p \Big(\frac{N_q-N_{q-1}}{N_p}\Big)\eta_{n}^{N_q-N_{q-1},0}(\varphi) \, ,\\
\eta_{n}^{N_q-N_{q-1},0}(\varphi) & := & \frac{1}{N_{q}-N_{q-1}}\sum_{i=N_{q-1}+1}^{N_q}\varphi(x_n^{i,0}) \, .
\end{eqnarray*}
Here $x_n^{1,0},\dots,x_n^{N_{0},0}$ are generated from the first PF, independently $x_n^{N_0+1,0},\dots,x_n^{N_{1},0}$ from the second and so on.
The procedure for sampling, in order to compute \eqref{eq:eta_0_app} is summarized in Algorithm \ref{alg:pf_est}.
The approximation of $\bar{\eta}_n^0(\varphi)$ is finally 
\begin{equation}\label{eq:eta_0_app}
\bar{\eta}_n^{N_p,0}(\varphi) = 
\frac{\eta_{n}^{N_{0:p}0}(G_n\varphi)}{\eta_{n}^{N_{0:p},0}(G_n)} \, . 
\end{equation}
We remark that the strategy of running a CPF with $N_{p-1}$ samples and then running an additional one with $N_p - N_{p-1}$
would not suffice and lead to biased estimator, hence the strategy adopted,

\begin{algorithm}
\begin{enumerate}
\item{Initialization: Run Algorithm \ref{alg:pf} with $N_0$ samples. Set $q=1$. 
If $p=0$ stop; otherwise go to 2.}
\item{Iteration: Independently of all other samples, run Algorithm \ref{alg:pf} with 
$N_q-N_{q-1}$ samples. Set $q=q+1$. 
If $q=p+1$ stop; otherwise go to the start of 2.}
\end{enumerate}
\caption{Approach for sampling to compute \eqref{eq:eta_0_app}, for $p\in\mathbb{Z}^+$ given.}
\label{alg:pf_est}
\end{algorithm}

\subsubsection{Approximating $[\bar{\eta}_n^l-\bar{\eta}_n^{l-1}](\varphi)$}

Let $l\in\mathbb{N}$ be given. 
We will review a method to approximate $\bar{\eta}_n^l-\bar{\eta}_n^{l-1}$ 
using coupled PFs (CPFs).
The objective of CPFs is to recursively approximate a coupling of $(\eta_n^l,\eta_n^{l-1})$. We describe the method in \cite{mlpf}.

The following exposition is from \cite{cpf_clt,mlpf}. Let $n\in\mathbb{N}$, $B\in\mathcal{X}\vee\mathcal{X}$ and $\mu\in\mathcal{P}(\mathsf{X}\times\mathsf{X})$ and define the probability measure:
\begin{eqnarray*}
\check{\Phi}_n^l(\mu)(B) & = & \mu\Big(\{F_{n-1,\mu,l} \wedge F_{n-1,\mu,l-1}\} \check{M}^l(B)\Big)
+ \Big(1-
\mu\Big(\{F_{n-1,\mu,l} \wedge F_{n-1,\mu,l-1}\}\Big)
\Big) \times \\ & & 
(\mu\otimes\mu)\Big(\Big\{\overline{F}_{n-1,\mu,l}\otimes \overline{F}_{n-1,\mu,l-1}\Big\}\bar{M}^l(B)\Big)
\end{eqnarray*}
where for $(x,x')\in\mathsf{X}\times\mathsf{X}$
\begin{eqnarray*}
\overline{F}_{n-1,\mu,l}(x,x') &  = & \frac{F_{n-1,\mu,l}(x,x')-\{F_{n-1,\mu,l}(x,x')\wedge F_{n-1,\mu,l-1}(x,x')\}}{
\mu(F_{n-1,\mu,l}-\{F_{n-1,\mu,l}\wedge F_{n-1,\mu,l-1}\})} \\
\overline{F}_{n-1,\mu,l-1}(x,x') &  = & \frac{F_{n-1,\mu,l-1}(x,x')-\{F_{n-1,\mu,l}(x,x')\wedge F_{n-1,\mu,l-1}(x,x')\}}{
\mu(F_{n-1,\mu,l-1}-\{F_{n-1,\mu,l}\wedge F_{n-1,\mu,l-1}\})} \\
F_{n-1,\mu,l}(x,x') &  = & \check{G}_{n-1,\mu,l}(x)\otimes1 \\
F_{n-1,\mu,l-1}(x,x') &  = & 1\otimes \check{G}_{n-1,\mu,l-1}(x') \\
\check{G}_{n-1,\mu,l}(x) & = & \frac{G_{n-1}(x)}{\mu(G_{n-1}\otimes 1)} \\
\check{G}_{n-1,\mu,l-1}(x') & = & \frac{G_{n-1}(x')}{\mu(1\otimes G_{n-1})} 
\end{eqnarray*}
and for $((x,x'),(z,z'))\in\mathsf{X}^2\times \mathsf{X}^2$ and $B\in\mathcal{X}\vee\mathcal{X}$
$$
\bar{M}^l(B)((x,x'),(z,z')) = \check{M}^l(B)(x,z').
$$
Now we define, recursively, for any $n\in\mathbb{N}$, $B\in\mathcal{X}\vee\mathcal{X}$
$$
\check{\eta}_n^l(B) = \check{\Phi}_n^l(\check{\eta}_{n-1}^l)(B).
$$
Note that by \cite[Proposition A.1]{mlpf} we have for $B\in\mathcal{X}$
$$
\check{\eta}_n^l(B\times\mathsf{X}) = \eta_n^l(B)\quad\textrm{and}\quad\check{\eta}_n^l(\mathsf{X}\times B) = \eta_n^{l-1}(B).
$$
The CPF at time $n$ has the following law, 
with $u_k^i=(x_k^{i,l},x_k^{i,l-1})\in\mathsf{X}\times\mathsf{X}$:
$$
\mathbb{\check{P}}^N(d(u_0^{1:N},\dots,u_n^{1:N})) = \Big(\prod_{i=1}^N \check{\eta}_0^l(du_0^i)\Big)\Big(\prod_{k=1}^n
\prod_{i=1}^N\check{\Phi}_k^l(\check{\eta}_{k-1}^{N,l})(du_k^i)\Big)
$$
where for $k\in\mathbb{N}$, 
$$
\check{\eta}_{k-1}^{N,l}(du) = \frac{1}{N}\sum_{i=1}^N \delta_{u_{k}^{i}}(du).
$$
and we set for  $s\in\{l,l-1\}$
$$
\eta_{p-1}^{N,s}(dx) = \frac{1}{N}\sum_{i=1}^N \delta_{x_{p-1}^{i,s}}(dx).
$$
To estimate $[\bar{\eta}_n^l-\bar{\eta}_n^{l-1}](\varphi)$, which will be critical in our forthcoming exposition, we have the estimate
$$
\frac{\eta_n^{N,l}(G_n\varphi)}{\eta_n^{N,l}(G_n)} - \frac{\eta_n^{N,l-1}(G_n\varphi)}{\eta_n^{N,l-1}(G_n)}.
$$
Note that this estimate is biased, but consistent, 
i.e. it converges in the limit as $N\rightarrow \infty$ 
but has a bias for any finite $N$. 
The CPF is summarized in Algorithm \ref{alg:cpf}.

\begin{algorithm}[h!]
\begin{enumerate}
\item{Initialization: For $i\in\{1,\dots,N\}$ sample $U_0^{i}$ from $\check{\eta}_0^l$. Set $n=1$.}
\item{Resampling and Sampling: For $i\in\{1,\dots,N\}$ sample $U_n^{i}$ from $\check{\Phi}_n^l(\check{\eta}_{n-1}^{N,l})$. 
Set $n=n+1$ and return to the start of 2.}
\end{enumerate}
\caption{A Coupled Particle Filter with a fixed number of samples $N\in\mathbb{Z}^+$ and a given $l\in\mathbb{N}$.}
\label{alg:cpf}
\end{algorithm}

To form our approximation of $\bar{\eta}_n^l-\bar{\eta}_n^{l-1}(\varphi)$ of $N_0$ samples, we run the CPF as described above with $N_0$ samples. To form the approximation
with $N_1$ samples, we run a CPF independently of the first CPF with $N_1-N_0$ samples and so on, for any $p\geq 2$ (i.e.~with $N_2-N_1,\dots, N_{p}-N_{p-1}$ samples).
For a given $p$, the joint law of the simulated samples is 
$$
\mathbb{\check{P}}_c(d(u_0^{1:N_p},\dots,u_n^{1:N_p}))
 = \prod_{q=0}^p \mathbb{\check{P}}^{N_q-N_{q-1}}(d(u_0^{N_{q-1}+1:N_q},\dots,u_n^{N_{q-1}+1:N_q}))
$$
and we will denote expectations w.r.t.~$\mathbb{\check{P}}_c$ as $\mathbb{\check{E}}_c$.
For $s\in\{l,l-1\}$ and any $\varphi\in\mathcal{B}_b(\mathsf{X})$ we define 
\begin{eqnarray*}
\eta_n^{N_{0:p},s}(\varphi) & := & \sum_{q=0}^p\Big(\frac{N_q-N_{q-1}}{N_p}\Big)\eta_n^{N_q-N_{q-1},s}(\varphi) \, , \\
\eta_n^{N_q-N_{q-1},s}(\varphi) & := & \frac{1}{N_q-N_{q-1}}\sum_{i=N_{q-1}+1}^{N_q}\varphi(x_n^{i,s}) \, .
\end{eqnarray*}

Finally the approximation of 
$\bar{\eta}_n^l-\bar{\eta}_n^{l-1}(\varphi)$ with $N_p$ samples is then
\begin{equation}\label{eq:eta_l_app}
[\bar{\eta}_n^{N_p,l}-\bar{\eta}_n^{N_p,l-1}](\varphi) = 
\frac{\eta_n^{N_{0:p},l}(G_n\varphi)}{\eta_n^{N_{0:p},l}(G_n)} - \frac{\eta_n^{N_{0:p},l-1}(G_n\varphi)}{\eta_n^{N_{0:p},l-1}(G_n)} \, .
\end{equation}
The procedure for sampling, in order to compute \eqref{eq:eta_l_app} is summarized in Algorithm \ref{alg:cpf_est}.

\begin{algorithm}[h!]
\begin{enumerate}
\item{Initialization: Run Algorithm \ref{alg:cpf} with $N_0$ samples. Set $q=1$. If $p=0$ stop, otherwise go to 2.}
\item{Iteration: Independently of all other samples, run Algorithm \ref{alg:cpf} with $N_q-N_{q-1}$ samples. Set $q=q+1$. If $q=p+1$ stop; otherwise go to the start of 2.}
\end{enumerate}
\caption{Approach for sampling to compute \eqref{eq:eta_l_app}, for $(p,l)\in\mathbb{Z}^+\times\mathbb{N}$ given.}
\label{alg:cpf_est}
\end{algorithm}

\subsection{Algorithm} \label{sec:algo}

Our procedure for computing unbiased estimates, based on the components developed in the previous sections, is summarized in Algorithm \ref{alg:main_method}. 
\begin{algorithm}[h!]
\begin{enumerate}
\item{For $i\in\{1,\dots,M\}$ sample $L_i\in\mathbb{Z}^+$ according to $\mathbb{P}_L$ and $P_i\in\mathbb{Z}^+$ according to $\mathbb{P}_P$. 
Denote the realizations as $(l_i, p_i)$.}
\item{If $l_i=0$ compute 
$$
\Xi_{l_i,p_i} =  \frac{1}{\mathbb{P}_P(p_i)}\{\bar{\eta}_n^{N_{p_i},0}(\varphi)-\bar{\eta}_n^{N_{p_i-1},0}(\varphi)\}
$$
where $\bar{\eta}_n^{N_{p},0}(\varphi)$ is as \eqref{eq:eta_0_app} (see Algorithm \ref{alg:pf_est}).}
\item{Otherwise, compute 
$$
\Xi_{l_i,p_i} =  \frac{1}{\mathbb{P}_P(p_i)}
\Big([\bar{\eta}_n^{N_{p_i},l_i}-\bar{\eta}_n^{N_{p_i}, l_i-1}](\varphi)  -
[\bar{\eta}_n^{N_{p_i-1},l_i}-\bar{\eta}_n^{N_{p_i-1}, l_i-1}](\varphi) 
\Big)
$$
where $[\bar{\eta}_n^{N_{p},l}-\bar{\eta}_n^{N_{p}, l-1}](\varphi) $ is as \eqref{eq:eta_l_app} (see Algorithm \ref{alg:cpf_est}).}
\item{Return the estimate:
\begin{equation}\label{eq:ub_pf_est}
\widehat{\bar{\eta}_n(\varphi)} = \frac{1}{M}\sum_{i=1}^M \frac{1}{\mathbb{P}_L(l_i)}\Xi_{l_i,p_i}.
\end{equation}
}
\end{enumerate}
\caption{Algorithm for Unbiased Estimation of $\bar{\eta}_n(\varphi)$.}
\label{alg:main_method}
\end{algorithm}

A few remarks can help to clarify the algorithm.
\begin{itemize}
\item The terms in the differences in $\Xi_{l,p}$ are not independent; they will share $N_{p-1}$ common samples that have been produced by the PF/CPF.
\item 
Each sample in the estimate \eqref{eq:ub_pf_est} can be computed in parallel; i.e.~this is amenable to parallel computation. 
\item 
One can correlate them $P$ and $L$. There is no reason why 
they need to be independent random variables. 
\item 
The algorithm is online,
i.e. the computational cost per observation time
is fixed. For each sample in the estimate \eqref{eq:ub_pf_est},
one can simply fix the $L_i$ and $P_i$ sampled at time $0$ and update the estimates of the filter as time progresses, by using the sequential nature of the PF/CPF algorithms.
\item 
At this stage, we have still not established that the estimator
is unbiased with finite variance, nor have we investigated the associated computational effort to compute the estimate; this is the topic of Section \ref{sec:theory}.
\end{itemize}

The scheme that has been proposed is a type of double randomization (or double `Rhee \& Glynn', following from the work \cite{rhee}) where one randomizes twice; firstly with respect to the discretization level of the diffusion and secondly to obtain unbiased estimates of the increments. The first randomization seems necessary, given the current state-of-the-art of stochastic computation; we are assuming that unbiased simulation methods (e.g.~\cite{fearn} and the references therein) are not sensible in our problems of interest. 
For the second randomization, there are several alternatives which could be considered. The first is to replace the 'single-term' estimator that we are currently using with the `coupled-sum' estimator (\cite{rhee}). 
Methodologically, this is not significantly different from what we have suggested, but the conditions for unbiasedness and finite variance change marginally. More precisely, one
would use the term
$$
\Xi_{l_i,p_i} = \sum_{s=0}^{p_i}\frac{1}{\sum_{q=s}^{\infty} \mathbb{P}_P(q)}
\Big([\bar{\eta}_n^{N_{s},l_i}-\bar{\eta}_n^{N_{s}, l_i-1}](\varphi)  -
[\bar{\eta}_n^{N_{s-1},l_i}-\bar{\eta}_n^{N_{s-1}, l_i-1}](\varphi)\Big) 
$$
in 3.~of Algorithm \ref{alg:main_method}, with a similar type expression in 2.~of Algorithm \ref{alg:main_method}.
A second alternative would be to use an unbiased sampling scheme 
based upon Markov chain simulation (e.g.~\cite{glynn2, jacob2}). Although these latter schemes would have to be modified and enhanced to be applicable in the context here, the main issue with applying them is that they are not intrinsically `online'. 
We note also that these schemes themselves are based upon randomization methods, and hence one would be using a double randomization again.
We also remark that the approach to obtain $\Xi_{l,p}$, which is essentially a randomization on the number of samples, is similar to the approach in \cite{glynn}. In \cite{glynn} the authors also use an associated idea to unbiasedly estimate non-linear functions of expectations. The approach is related, except they rely on the using independent samples from a probability of interest; in this scenario one can use all the same 
$N_p$ samples in $\Xi_{l,p}$ to construct both the fine and course approximations,
whereas, this does not seem to be possible when the samples are not independent. 
This is related to the antithetic coupling described in \cite{giles2}.

One may attempt to construct an estimator using a single randomization.
It is not clear how to construct an efficient method with this approach. 
In Algorithm \ref{alg:sl_main_method} we present a potential single randomization strategy and
the estimator is given in \eqref{eq:ub_pf_est_sl}. 
The framework for this estimator is just a 
single-term estimator as discussed (for instance) in Section \ref{sec:gen} and,
as described there, one can establish that \eqref{eq:ub_pf_est_sl} is both unbiased and of finite variance if \eqref{eq:ub_fv_cond} is satisfied. 
In Section \ref{subsec:singlerand} we will explain why 
this estimator does not work well.

\begin{algorithm}[h!]
\begin{enumerate}
\item{For $i\in\{1,\dots,M\}$ sample $L_i\in\mathbb{Z}^+$ according to $\mathbb{P}_L$.
Let $l_i$ denote the realizations.}
\item{If $l_i=0$ compute 
$$
\Xi_{l_i} = \frac{1}{\mathbb{P}_L(l_i)}\bar{\eta}_n^{N_{0},0}(\varphi)
$$
where $\bar{\eta}_n^{N_{0},0}(\varphi)$ is as \eqref{eq:eta_0_app} (see Algorithm \ref{alg:pf_est}) with $p=0$.}
\item{Otherwise, compute 
$$
\Xi_{l_i} = \frac{1}{\mathbb{P}_L(l_i)}
\Bigg( \Big(\frac{N_{l_i}-N_{l_i-1}}{N_{l_i}}\Big ) \bar{\eta}_n^{N_{l_i}-N_{l_i-1},l_i}(\varphi) + 
\Big [\Big(\frac{N_{l_i-1}}{N_{l_i}}\Big) \bar{\eta}_n^{N_{l_i-1},l_i}-\bar{\eta}_n^{N_{l_i-1}, l_i-1}
\Big ](\varphi) \Bigg)
$$
where $\bar{\eta}_n^{N_{l_i}-N_{l_i-1},l_i}(\varphi)$ is computed using Algorithm \ref{alg:pf}, independently of $[\bar{\eta}_n^{N_{l_i-1},l_i}-\bar{\eta}_n^{N_{l_i-1}, l_i-1}](\varphi)$ which is as \eqref{eq:eta_l_app} (see Algorithm \ref{alg:cpf_est}).}
\item{Return the estimate:
\begin{equation}\label{eq:ub_pf_est_sl}
\widehat{\bar{\eta}_n(\varphi)} = \frac{1}{M}\sum_{i=1}^M \frac{1}{\mathbb{P}_L(l_i)}\Xi_{l_i}.
\end{equation}
}
\end{enumerate}
\caption{A Single Randomized Algorithm for Unbiased Estimation of $\bar{\eta}_n(\varphi)$.}
\label{alg:sl_main_method}
\end{algorithm}

\section{Theoretical Results}\label{sec:theory}

\subsection{Unbiasedness and Finite Variance}

Our objective is now to establish that the estimator \eqref{eq:ub_pf_est} is unbiased with finite variance. To show this, we must show
that there exist positive probability mass functions $\mathbb{P}_L$, $\mathbb{P}_P$ 
and $(N_p)_{p\in\mathbb{Z}^+}$ an increasing sequence of positive integers with $\lim_{p\rightarrow\infty}N_p=\infty$ 
such that first \eqref{eq:xipl_cond} holds and then
\eqref{eq:xi_cond} also holds, under the particular strategy detailed in Algorithm \ref{alg:main_method}. We first state two results that can help to achieve our
objectives.

We begin with the PF and the proof of this result can be found in Appendix \ref{app:pf_prf}.
\begin{prop}\label{prop:pf_prop}
For any $n\in\mathbb{Z}^+$ there exists a $C<+\infty$ such that for any $p\in\mathbb{Z}^+$, $N_p>N_{p-1}>\cdots>N_0\geq 1$, $\varphi\in\mathcal{B}_b(\mathsf{X})$:
$$
\mathbb{E}_c\Big[\Big(\bar{\eta}_n^{N_p,0}(\varphi)-\bar{\eta}_n^0(\varphi)\Big)^2\Big] \leq \frac{C\|\varphi\|_{\infty}^2}{N_p}\Big(1+\frac{p^2}{N_p}\Big).
$$
\end{prop}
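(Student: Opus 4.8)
The plan is to reduce the claim to a second-moment bound on a weighted average of independent predictor estimates, and then to invoke standard single-filter error bounds. Write $\psi := G_n(\varphi - \bar{\eta}_n^0(\varphi))$, so that $\eta_n^0(\psi) = 0$ and $\|\psi\|_\infty \le 2\|G_n\|_\infty\|\varphi\|_\infty$. By the definition \eqref{eq:eta_0_app}, a direct rearrangement gives the self-normalized identity
$$
\bar{\eta}_n^{N_p,0}(\varphi) - \bar{\eta}_n^0(\varphi) = \frac{\eta_n^{N_{0:p},0}(\psi)}{\eta_n^{N_{0:p},0}(G_n)}.
$$
The denominator converges to $\eta_n^0(G_n) > 0$ and will be controlled as discussed below; modulo this, the problem reduces to a second-moment bound on the numerator $\eta_n^{N_{0:p},0}(\psi) = \sum_{q=0}^p w_q\,\eta_n^{M_q,0}(\psi)$, where $M_q := N_q - N_{q-1}$, $w_q := M_q/N_p$, and $\sum_{q=0}^p w_q = 1$.

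The central step is a bias--variance decomposition exploiting that the $p+1$ filters are run independently. Each $\eta_n^{M_q,0}(\psi)$ is the predictor estimate of an independent particle filter with $M_q$ particles, so I would invoke the standard single-filter bounds (as in \cite{cpf_clt,mlpf,delm:13}): the $L_2$ bound $\mathbb{E}_c[(\eta_n^{M_q,0}(\psi) - \eta_n^0(\psi))^2] \le C\|\psi\|_\infty^2/M_q$ and the bias bound $|\mathbb{E}_c[\eta_n^{M_q,0}(\psi)] - \eta_n^0(\psi)| \le C\|\psi\|_\infty/M_q$. Since $\eta_n^0(\psi)=0$, these give $\mathrm{Var}(\eta_n^{M_q,0}(\psi)) \le C\|\psi\|_\infty^2/M_q$ and $|b_q| \le C\|\psi\|_\infty/M_q$ for $b_q := \mathbb{E}_c[\eta_n^{M_q,0}(\psi)]$. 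Using $\mathbb{E}_c[X^2] = \mathrm{Var}(X) + (\mathbb{E}_c X)^2$ together with independence across $q$,
$$
\mathbb{E}_c\big[(\eta_n^{N_{0:p},0}(\psi))^2\big] = \sum_{q=0}^p w_q^2\,\mathrm{Var}(\eta_n^{M_q,0}(\psi)) + \Big(\sum_{q=0}^p w_q b_q\Big)^2.
$$
The variance term telescopes, $\sum_q (M_q/N_p)^2 (C\|\psi\|_\infty^2/M_q) = (C\|\psi\|_\infty^2/N_p^2)\sum_q M_q = C\|\psi\|_\infty^2/N_p$, because $\sum_{q=0}^p M_q = N_p$; and the mean term obeys $(\sum_q w_q |b_q|)^2 \le (\sum_q (M_q/N_p)(C\|\psi\|_\infty/M_q))^2 = (p+1)^2 C^2\|\psi\|_\infty^2/N_p^2$. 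Combining yields $\mathbb{E}_c[(\eta_n^{N_{0:p},0}(\psi))^2] \le C\|\psi\|_\infty^2 N_p^{-1}(1 + (p+1)^2/N_p)$, which after absorbing $\|G_n\|_\infty$, the denominator constant, and the elementary bound relating $(p+1)^2$ to $p^2$ (handling $p=0$ directly) into $C$ is exactly the claimed inequality.

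The main obstacles are twofold. First, the per-filter \emph{bias} bound of order $M_q^{-1}$, not merely a variance bound, is precisely what produces the $p^2/N_p$ correction, so the argument genuinely relies on the known $O(1/M)$ bias of the self-normalized particle predictor; I would cite \cite{cpf_clt,delm:13} for this rather than reprove it. Second, the random denominator must be controlled. If $G_n$ is bounded below away from zero (a standard hypothesis in this setting, cf.~\cite{mlpf}), then $\eta_n^{N_{0:p},0}(G_n)$ is bounded below deterministically and the reduction in the first paragraph is immediate. Otherwise one splits on the event $\{\eta_n^{N_{0:p},0}(G_n) \ge \tfrac12\eta_n^0(G_n)\}$: on it the denominator is bounded below by $\tfrac12\eta_n^0(G_n)$, while on its complement one uses the deterministic bound $|\bar{\eta}_n^{N_p,0}(\varphi) - \bar{\eta}_n^0(\varphi)| \le 2\|\varphi\|_\infty$ together with a Chebyshev estimate on $\mathbb{E}_c[(\eta_n^{N_{0:p},0}(G_n) - \eta_n^0(G_n))^2]$, which carries the same $N_p^{-1}(1+p^2/N_p)$ structure by repeating the bias--variance argument verbatim with $G_n$ in place of $\psi$, so that the complementary event contributes at the same order.
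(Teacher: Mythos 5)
Your proposal is correct, and its core coincides with the paper's: the paper first proves Proposition \ref{prop:pf_prop_1}, which bounds $\mathbb{E}_c[(\eta_n^{N_p,0}(\varphi)-\eta_n^0(\varphi))^2]$ by expanding the square of the weighted average of the $p+1$ independent particle filters and invoking exactly the two per-filter ingredients you use (the $O(1/N)$ $L_2$ bound and the $O(1/N)$ bias bound, both from \cite[Proposition 9.5.6]{delm:13}); the diagonal terms give the $1/N_p$ contribution and the $O((p+1)^2)$ cross/bias terms give the $p^2/N_p^2$ correction, just as in your variance-plus-squared-mean decomposition. Where you genuinely diverge is the passage from the unnormalized averages to the self-normalized filter estimate. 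You reduce to the exact identity $\bar{\eta}_n^{N_p,0}(\varphi)-\bar{\eta}_n^0(\varphi)=\eta_n^{N_{0:p},0}(\psi)/\eta_n^{N_{0:p},0}(G_n)$ and must then control the random denominator by splitting on $\{\eta_n^{N_{0:p},0}(G_n)\geq\tfrac12\eta_n^0(G_n)\}$ and applying Chebyshev on the complement. The paper avoids this entirely via the decomposition
\begin{equation*}
\bar{\eta}_n^{N_p,0}(\varphi)-\bar{\eta}_n^0(\varphi)
=\frac{\eta_{n}^{N_p,0}(G_n\varphi)}{\eta_{n}^{N_p,0}(G_n)\,\eta_n^0(G_n)}\Big(\eta_n^0(G_n)-\eta_n^{N_p,0}(G_n)\Big)
+\frac{1}{\eta_n^0(G_n)}\Big(\eta_{n}^{N_p,0}(G_n\varphi)-\eta_{n}^{0}(G_n\varphi)\Big),
\end{equation*}
in which the random denominator appears only inside the self-normalized ratio $\eta_{n}^{N_p,0}(G_n\varphi)/\eta_{n}^{N_p,0}(G_n)$, which is bounded by $\|\varphi\|_{\infty}$ deterministically; Proposition \ref{prop:pf_prop_1} applied to the test functions $G_n$ and $G_n\varphi$ then finishes immediately, with only the deterministic constant $1/\eta_n^0(G_n)$ entering $C$. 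Both routes give the stated bound with the same constant structure, but the paper's linearization needs no lower bound on the empirical normalizer and no event splitting, so your extra paragraph of denominator control is work the paper's algebra renders unnecessary.
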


We introduce the following assumptions, which will be needed for the case of the CPF.
\begin{hypA}\label{hyp:1}
There exist $c,C<+\infty$ such that for every $n\in\mathbb{Z}^+$ we have 
\begin{itemize}
\item[{\rm (i)}] {\bf boundedness}: $c^{-1} < G_n(x) < c$ for all $x \in \mathsf{X}$; 
\item[{\rm (ii)}] {\bf a globally Lipschitz condition}: $|G_n(x) - G_n(x')| \leq C |x-x'|$, for all $(x,x') \in \mathsf{X}\times\mathsf{X}$ and $|\cdot|$ is the $L_2-$norm.
\end{itemize}
\end{hypA}

\begin{hypA}\label{hyp:2}
There exists a  $C<+\infty$ such that for each $(x,x')\in \mathsf{X}\times\mathsf{X}$, $l\in\mathbb{Z}^+$ and $\varphi\in\mathcal{B}_b(\mathsf{X})\cap\textrm{Lip}(\mathsf{X})$
$$
|M^l(\varphi)(x) - M^l(\varphi)(x')| \leq C\|\varphi\|_{\infty}~|x-x'|.
$$
\end{hypA}

The proof of the following result for the CPF is in Appendix \ref{app:cpf_prf}.
\begin{theorem}\label{theo:cpf_res}
Assume (A\ref{hyp:1}-\ref{hyp:2}). Then for any $n\in\mathbb{Z}^+$ there exists a $C<+\infty$ such that for any $(l,p)\in\mathbb{N}\times\mathbb{Z}^+$, $N_p>N_{p-1}>\cdots>N_0\geq 1$, $\varphi\in\mathcal{B}_b(\mathsf{X})\cap\textrm{\emph{Lip}}(\mathsf{X})$:
\begin{equation}\label{eq:cpf_bound}
\mathbb{\check{E}}_c\Big[\Big(
[\bar{\eta}_n^{N_p,l}-\bar{\eta}_n^{N_p,l-1}](\varphi)
-[\bar{\eta}_n^l-\bar{\eta}_n^{l-1}](\varphi)\Big)^2\Big] \leq 
\frac{C\Delta_l^{\beta}\|\varphi\|_{\infty}^2}{N_p}\Big(1+\frac{p^2}{N_p}\Big)
\end{equation}
where $\beta=\frac{1}{2}$ if $b$ is non-constant and $\beta=1$ if $b$ is constant.
\end{theorem}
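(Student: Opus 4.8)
The plan is to reduce the statement to two ingredients: a per-CPF \emph{coupled} error bound that supplies the factor $\Delta_l^\beta$, and an averaging argument over the $p+1$ independent CPFs that supplies the factor $(1+p^2/N_p)$, exactly as in the single-level Proposition~\ref{prop:pf_prop}. First I would remove the nonlinearity of the ratios. For $s\in\{l,l-1\}$ set $\varphi_s:=\varphi-\bar{\eta}_n^s(\varphi)$, so that $\eta_n^s(G_n\varphi_s)=0$, and use the elementary identity
$$
\bar{\eta}_n^{N_p,s}(\varphi)-\bar{\eta}_n^s(\varphi)=\frac{\eta_n^{N_{0:p},s}(G_n\varphi_s)}{\eta_n^{N_{0:p},s}(G_n)}.
$$
Under (A\ref{hyp:1})(i) the denominators are bounded below by $c^{-1}$, so after subtracting the two levels the error becomes, up to factors controlled by boundedness of $G_n$, a difference of the asymptotically centred numerators $\eta_n^{N_{0:p},l}(G_n\varphi_l)-\eta_n^{N_{0:p},l-1}(G_n\varphi_{l-1})$. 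The essential point is that the two levels must be kept together rather than bounded separately, since only the coupled difference carries the $\Delta_l^\beta$ smallness.

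Next I would prove a single-CPF estimate, treating one run of Algorithm~\ref{alg:cpf} with $N$ samples:
$$
\mathbb{\check{E}}\Big[\Big([\bar{\eta}_n^{N,l}-\bar{\eta}_n^{N,l-1}](\varphi)-[\bar{\eta}_n^l-\bar{\eta}_n^{l-1}](\varphi)\Big)^2\Big]\le \frac{C\Delta_l^\beta\|\varphi\|_\infty^2}{N}.
$$
This is where the coupling does its work and is essentially the coupled-particle-filter bound of \cite{mlpf,cpf_clt}. The mechanism is that the kernel $\check{M}^l$ propagates a pair $(x_k^{i,l},x_k^{i,l-1})$ whose coordinates satisfy the Euler strong-error bound $\mathbb{\check{E}}\|x_k^{i,l}-x_k^{i,l-1}\|^2\lesssim\Delta_l^\beta$, with $\beta=1$ when $b$ is constant (the scheme being then exact in the diffusive part) and $\beta=\tfrac12$ otherwise. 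I would propagate this smallness through the Feynman--Kac recursion via a telescoping/semigroup decomposition of $\check{\eta}_n^{N,l}-\check{\eta}_n^l$: at each step the discrepancy between the $l$ and $l-1$ coordinates enters only through Lipschitz increments of the weight (A\ref{hyp:1})(ii)) and of the mutation kernel (A\ref{hyp:2})), while the uniform lower bound on $G_n$ furnishes the time-uniform stability of the predictor needed to sum these increments. Centring by $\varphi_s$ and applying $|M^l(\varphi)(x)-M^l(\varphi)(x')|\le C\|\varphi\|_\infty\,|x-x'|$ lets me trade Lipschitz constants for $\|\varphi\|_\infty$, explaining the $\|\varphi\|_\infty^2$ in the final bound.

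Finally I would assemble the $p+1$ independent CPFs. Writing $\eta_n^{N_{0:p},s}=\sum_{q=0}^p w_q\,\eta_n^{N_q-N_{q-1},s}$ with $w_q=(N_q-N_{q-1})/N_p$, the centred numerator is a convex combination, over independent runs, of the single-CPF numerators analysed above. Independence kills the cross terms in the variance, so the fluctuation part contributes $\Delta_l^\beta\sum_q w_q^2/(N_q-N_{q-1})=\Delta_l^\beta/N_p$ (using $\sum_{q}(N_q-N_{q-1})=N_p$). The residual comes from the $O(1/(N_q-N_{q-1}))$ bias each normalized estimate inherits from the ratio nonlinearity; propagated through the joint denominator and summed over $q=0,\dots,p$ it contributes a mean-square term of order $\Delta_l^\beta p^2/N_p^2$, and combining with the fluctuation term $\Delta_l^\beta/N_p$ yields the stated $\frac{C\Delta_l^\beta\|\varphi\|_\infty^2}{N_p}(1+p^2/N_p)$. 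This bookkeeping is identical in form to the proof of Proposition~\ref{prop:pf_prop}, the only change being the systematic $\Delta_l^\beta$ prefactor carried along from the coupled step.

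The hardest part will be the single-CPF coupled bound, and specifically propagating the Euler strong error through the resampling step of $\check{\Phi}_n^l$ without destroying the correlation between the two coordinates. The maximal-coupling construction (the $F\wedge F$ terms) is precisely what keeps the paired particles close after resampling, and one must verify that the induced discrepancy remains controlled by $\Delta_l^\beta$ rather than $O(1)$; this, together with securing $C$ uniformly over $(l,p)$ and with only explicit (harmless) dependence on $n$, is where the genuine effort lies. The $\beta$ dichotomy in turn must be tracked back to the strong rate of Euler--Maruyama under the chosen synchronous coupling of the driving Brownian increments.
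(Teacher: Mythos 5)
Your overall skeleton matches the paper's: linearize the ratios using the lower bound on $G_n$, prove a per-CPF estimate, then repeat the Proposition \ref{prop:pf_prop_1}-style bookkeeping over the $p+1$ independent runs (this is exactly the paper's Proposition \ref{prop:cpf_prop_1}, and the final ratio step is its Lemma C.5-type argument). Your fluctuation accounting, $\Delta_l^{\beta}\sum_q w_q^2/(N_q-N_{q-1})=\Delta_l^{\beta}/N_p$, is also correct. The genuine gap is in the cross-term step. Because each CPF estimator is biased, independence turns the off-diagonal terms into products of biases, and to obtain \eqref{eq:cpf_bound} you need each bias to be of order $\Delta_l^{\beta/2}/(N_q-N_{q-1})$: the smallness in $\Delta_l$ must survive in the \emph{bias of the coupled difference}, at half the exponent. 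Your proposal asserts only an $O(1/(N_q-N_{q-1}))$ bias ``from the ratio nonlinearity'' and then claims the cross terms contribute $\Delta_l^{\beta}p^2/N_p^2$; this is a non sequitur. With only an $O(1/N)$ bias the cross terms are $O(p^2/N_p^2)$ with no $\Delta_l$ factor, and the resulting bound $C\|\varphi\|_\infty^2\big(\Delta_l^{\beta}/N_p+p^2/N_p^2\big)$ is strictly weaker than \eqref{eq:cpf_bound}. This is not cosmetic: in Theorem \ref{theo:main_res} the $\Delta_l^{\beta}$ prefactor on the \emph{entire} right-hand side is what makes $\sum_{l}\mathbb{P}_L(l)^{-1}\sum_p \mathbb{P}_P(p)\mathbb{E}[\Xi_{l,p}^2]$ finite with $\mathbb{P}_L(l)\propto \Delta_l^{\beta\rho}$; a term in $\mathbb{E}[\Xi_{l,p}^2]$ with no decay in $l$ would make condition \eqref{eq:ub_fv_cond} unattainable for any probability mass function $\mathbb{P}_L$.

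The missing ingredient is precisely the paper's Proposition \ref{prop:cpf_res1}: for a single CPF with $N$ samples,
$\big|\mathbb{\check{E}}^N\big[[\eta_n^{N,l}-\eta_n^{N,l-1}](\varphi)-[\eta_n^{l}-\eta_n^{l-1}](\varphi)\big]\big|\leq C\Delta_l^{\beta/2}\|\varphi\|_{\infty}/N$ (the paper states it with exponents $1/4$, resp.\ $1/2$, i.e.\ half those of the theorem). Proving it is the real work of the appendix: a martingale-plus-remainder decomposition of the coupled particle system, in which the martingale part vanishes in expectation and the remainder is split into terms $T_1,\dots,T_4$ (and further $T_5,\dots,T_8$), each controlled via Lemma \ref{lem:av4}, the weak Euler error, and moment bounds imported from \cite{mlpf}. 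Relatedly, what you flag as the ``hardest part'' --- the per-CPF second-moment bound with rate $\Delta_l^{\beta}/N$ --- is not re-proved in this paper at all; it is quoted from \cite{mlpf} (Theorem C.4/Corollary D.6). So your plan leans on the ingredient that is already in the literature while omitting the bias estimate that is actually new, and without which the stated bound, and the unbiasedness/finite-variance conclusion built on it, cannot be reached.
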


\begin{theorem}\label{theo:main_res}
Assume (A\ref{hyp:1}-\ref{hyp:2}). 
Let $(n,\varphi)\in\mathbb{Z}^+\times(\mathcal{B}_b(\mathsf{X})\cap\textrm{\emph{Lip}}(\mathsf{X}))$.
Then there exist choices of positive probability mass functions $\mathbb{P}_L$, $\mathbb{P}_P$ 
and $(N_p)_{p\in\mathbb{Z}^+}$ an increasing sequence of integers with $\lim_{p\rightarrow\infty}N_p=\infty$ such that  \eqref{eq:xipl_cond} and
\eqref{eq:ub_fv_cond} hold and hence that each summand in \eqref{eq:ub_pf_est} is an unbiased estimator of $\bar{\eta}_n(\varphi)$ with finite variance.
\end{theorem}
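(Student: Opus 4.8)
The plan is to reduce the claim to the two summability conditions \eqref{eq:xipl_cond} and \eqref{eq:ub_fv_cond}. Once \eqref{eq:xipl_cond} holds for each $l$, \cite[Theorem 3]{vihola} applied to the inner randomization over $P$ gives both finite variance of $\Xi_l$ and the unbiasedness \eqref{eq:xi_cond}, $\mathbb{E}[\Xi_l]=\bar{\eta}_n^l(\varphi)-\bar{\eta}_n^{l-1}(\varphi)$ (the telescoping expectation $\sum_p(\mathbb{E}[D_p]-\mathbb{E}[D_{p-1}])$ collapses to the $N_p\to\infty$ limit by the $L^2$-consistency of Proposition \ref{prop:pf_prop} and Theorem \ref{theo:cpf_res}); then, once \eqref{eq:ub_fv_cond} also holds, the same theorem applied to the outer randomization over $L$ shows $\Xi_L/\mathbb{P}_L(L)$ is unbiased for $\bar{\eta}_n(\varphi)$ with finite variance, and hence so is each (i.i.d.) summand in \eqref{eq:ub_pf_est}. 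Everything therefore reduces to exhibiting $\mathbb{P}_P$, $(N_p)_{p\in\mathbb{Z}^+}$ and $\mathbb{P}_L$ for which both series converge.

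First I would control $\mathbb{E}[\Xi_{l,p}^2]$. Writing $D_p:=[\bar{\eta}_n^{N_p,l}-\bar{\eta}_n^{N_p,l-1}](\varphi)$ (resp.~$\bar{\eta}_n^{N_p,0}(\varphi)$ for $l=0$), with $N_p\to\infty$ limit $D:=[\bar{\eta}_n^l-\bar{\eta}_n^{l-1}](\varphi)$ (resp.~$\bar{\eta}_n^0(\varphi)$), and using $\Xi_{l,p}=\mathbb{P}_P(p)^{-1}(D_p-D_{p-1})$ with $D_{-1}:=0$, Minkowski's inequality together with $(a+b)^2\le 2a^2+2b^2$ gives for $p\ge 1$
$$
\mathbb{E}[\Xi_{l,p}^2]\le \frac{2}{\mathbb{P}_P(p)^2}\Big(\|D_p-D\|_2^2+\|D_{p-1}-D\|_2^2\Big).
$$
Applying Proposition \ref{prop:pf_prop} (for $l=0$) or Theorem \ref{theo:cpf_res} (for $l\ge 1$) and using $N_{p-1}<N_p$ to retain the dominant term, this is at most $C\Delta_l^{\beta}\|\varphi\|_\infty^2\,\mathbb{P}_P(p)^{-2}N_{p-1}^{-1}(1+p^2/N_{p-1})$, with the convention $\Delta_0=1$. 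The delicate term is $p=0$: here $\Xi_{l,0}=\mathbb{P}_P(0)^{-1}D_0$ and $\|D_0\|_2^2\le 2\|D_0-D\|_2^2+2|D|^2$, so I need a bound of order $\Delta_l^{\beta/2}$ on the \emph{deterministic} level-difference $|D|=|[\bar{\eta}_n^l-\bar{\eta}_n^{l-1}](\varphi)|$.

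This weak-error bound is the one ingredient not supplied by Proposition \ref{prop:pf_prop} or Theorem \ref{theo:cpf_res}, which control only the Monte Carlo error relative to the biased limiting filters. I would invoke the quantitative form of Proposition \ref{prop:euler_pred_conv} (the classical Euler weak rate propagated through the filtering recursion, as in \cite{mlpf}) to obtain $|[\bar{\eta}_n^l-\bar{\eta}_n^{l-1}](\varphi)|\le C\Delta_l^{\beta/2}\|\varphi\|_\infty$, hence $|D|^2\le C\Delta_l^{\beta}\|\varphi\|_\infty^2$. Combined with the previous step this yields $\mathbb{E}[\Xi_l^2]=\sum_{p\ge 0}\mathbb{P}_P(p)\mathbb{E}[\Xi_{l,p}^2]\le C\Delta_l^{\beta}\|\varphi\|_\infty^2\,S$ for $l\ge 1$, where $S:=\sum_{p\ge 0}\mathbb{P}_P(p)^{-1}N_{p-1}^{-1}(1+p^2/N_{p-1})$, while $\mathbb{E}[\Xi_0^2]\le CS\|\varphi\|_\infty^2$ is a finite constant (using also $|\bar{\eta}_n^0(\varphi)|\le\|\varphi\|_\infty$). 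The decay in $l$ is essential: since $\mathbb{P}_L$ is a probability mass function, $1/\mathbb{P}_L(l)\ge 1$, so a non-decaying $\mathbb{E}[\Xi_l^2]$ would force \eqref{eq:ub_fv_cond} to diverge.

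Finally I would choose the three objects, exploiting the considerable slack available. Taking a geometric ladder $N_p=2^p$ makes $N_{p-1}^{-1}(1+p^2/N_{p-1})$ of order $2^{-p}(1+p^2 2^{-p})$, so any geometric $\mathbb{P}_P(p)\propto r_P^{\,p}$ with ratio $r_P\in(\tfrac12,1)$ (e.g.~$r_P=\tfrac34$) renders the $p$-th term of $S$ of order $(2r_P)^{-p}$, giving $S<\infty$ and hence \eqref{eq:xipl_cond} for every $l$. Then choosing $\mathbb{P}_L(l)\propto 2^{-l\beta'}$ with $0<\beta'<\beta$ gives $\sum_{l\ge 1}\mathbb{E}[\Xi_l^2]/\mathbb{P}_L(l)\le C\|\varphi\|_\infty^2 S\sum_{l\ge 1}2^{-l(\beta-\beta')}<\infty$, and adding the finite $l=0$ contribution yields \eqref{eq:ub_fv_cond}. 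I expect the only genuine difficulty to be the $p=0$ term, i.e.~securing the $\Delta_l^{\beta/2}$ rate for the deterministic level-difference $[\bar{\eta}_n^l-\bar{\eta}_n^{l-1}](\varphi)$; the remaining estimates are routine applications of Minkowski's inequality with Proposition \ref{prop:pf_prop}/Theorem \ref{theo:cpf_res} and a slack choice of the geometric parameters.
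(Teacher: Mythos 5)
Your proposal is correct and structurally it is the paper's proof: reduce the theorem to verifying \eqref{eq:xipl_cond} and \eqref{eq:ub_fv_cond}, with unbiasedness and finite variance then following from the double application of \cite[Theorem 3]{vihola}; bound $\mathbb{E}[\Xi_{l,p}^2]$ via Proposition \ref{prop:pf_prop} for $l=0$ and Theorem \ref{theo:cpf_res} for $l\geq 1$ after splitting $D_p-D_{p-1}=(D_p-D)-(D_{p-1}-D)$; then exhibit explicit choices. Your $N_p=2^p$, geometric $\mathbb{P}_P$ with ratio $r_P\in(\tfrac12,1)$ and $\mathbb{P}_L(l)\propto 2^{-l\beta'}$ with $0<\beta'<\beta$ play exactly the role of the paper's $N_p=2^p$, $\mathbb{P}_P(p)\propto 2^{-p}(p+1)\log_2(p+2)^2$ and $\mathbb{P}_L(l)\propto(\Delta_l^{\beta})^{\rho}$, $\rho\in(0,1)$; the difference is cosmetic.

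The one substantive difference concerns the term you flag as the genuine difficulty, and there your treatment is actually more careful than the paper's. The paper asserts \eqref{eq:main_res1}--\eqref{eq:main_res2} ``for any $p\in\mathbb{Z}^+$'', but at $p=0$, where $\Xi_{l,0}=\mathbb{P}_P(0)^{-1}[\bar{\eta}_n^{N_0,l}-\bar{\eta}_n^{N_0,l-1}](\varphi)$ (the $p=-1$ term vanishing by convention), the deterministic limit does not cancel, and a bound of the form $C\Delta_l^{\beta}\|\varphi\|_{\infty}^2/N_0$ uniformly in $N_0$ cannot follow from Theorem \ref{theo:cpf_res} alone --- it would force $[\bar{\eta}_n^l-\bar{\eta}_n^{l-1}](\varphi)=0$. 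What is really needed, and what rescues the argument, is exactly the deterministic estimate you identify, $|[\bar{\eta}_n^l-\bar{\eta}_n^{l-1}](\varphi)|\leq C\Delta_l^{\beta/2}\|\varphi\|_{\infty}$; without it the $p=0$ contribution to $\mathbb{E}[\Xi_l^2]$ carries no decay in $l$ and \eqref{eq:ub_fv_cond} fails, since $\mathbb{P}_L(l)^{-1}\rightarrow\infty$. Your proposed source for it is sound: under (A\ref{hyp:1}) the potentials are bounded above and below, so the Euler weak-error bound \eqref{eq:weak_error} of order $\Delta_l$ propagates quantitatively through the filtering recursion exactly as in the proof of Proposition \ref{prop:euler_pred_conv}, giving a rate $\Delta_l\leq\Delta_l^{\beta/2}$ (since $\beta\leq 2$); equivalently one may cite the level-difference (bias) estimates of \cite{mlpf} that the paper's appendix already invokes elsewhere. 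In short, your proposal is correct, follows the paper's route, and in addition repairs a small lacuna that the published proof passes over silently.
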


\begin{proof}
Throughout the proof $C$ is a constant that does not depend on $l$ nor $p$ but whose value may change from line to line. 
In the case of \eqref{eq:xipl_cond}, we have, if $l=0$ and any $p\in\mathbb{Z}^+$ by Proposition \ref{prop:pf_prop},
\begin{equation}\label{eq:main_res1}
\mathbb{E}[\Xi_{l,p}^2] \leq \frac{1}{\mathbb{P}_P(p)^2}\frac{C\|\varphi\|_{\infty}^2}{N_{p-1}\vee N_0}\Big(1+\frac{p^2}{N_{p-1}\vee N_0}\Big).
\end{equation}
Then, for instance, setting 
$N_p=2^p$ and $\mathbb{P}_P(p)\propto 2^{-p}(p+1)\log_2(p+2)^2$ ensures that, the R.H.S.~of the displayed equation multiplied by $\mathbb{P}_P(p)$ is summable over $p\in\mathbb{Z}^+$; this verifies \eqref{eq:xipl_cond} when $l=0$. 
Now, if $l\in\mathbb{N}$, we have by Theorem \ref{theo:cpf_res}
\begin{equation}\label{eq:main_res2}
\mathbb{E}[\Xi_{l,p}^2] \leq \frac{1}{\mathbb{P}_P(p)^2}\frac{C\Delta_l^{\beta}\|\varphi\|_{\infty}^2}{N_{p-1}\vee N_0}\Big(1+\frac{p^2}{N_{p-1}\vee N_0}\Big)
\end{equation}
and thus by the above argument, \eqref{eq:xipl_cond} is verified when $l\in\mathbb{N}$.  Thus \eqref{eq:xi_cond} holds with our choice of $\Xi_{l,p}$.

To verify that \eqref{eq:ub_fv_cond} holds, we have
\begin{eqnarray*}
\sum_{l\geq 0}\frac{1}{\mathbb{P}_L(l)}\mathbb{E}[\Xi_l^2] & = & \sum_{l\in\mathbb{Z}^+}\frac{1}{\mathbb{P}_L(l)}\sum_{p\in\mathbb{Z}^+} 
\mathbb{P}_P(p) \mathbb{E}[\Xi_{l,p}^2] \\
& \leq & C\|\varphi\|_{\infty}^2\sum_{l\in\mathbb{Z}^+}\frac{1}{\mathbb{P}_L(l)}\sum_{p\in\mathbb{Z}^+} 
\frac{1}{\mathbb{P}_P(p)}\frac{\Delta_l^{\beta}}{N_{p-1}\vee N_0}\Big(1+\frac{p^2}{N_{p-1}\vee N_0}\Big)
\end{eqnarray*}
where we have applied \eqref{eq:main_res1}-\eqref{eq:main_res2} (note that $\Delta_0$ is $\mathcal{O}(1)$). 
Setting, for example $N_p=2^p$, $\mathbb{P}_P(p)\propto 2^{-p}(p+1)\log_2(p+2)^2$ 
and $\mathbb{P}_L(l)\propto (\Delta_l^{\beta})^{\rho}$ for any $\rho\in(0,1)$
ensures that \eqref{eq:ub_fv_cond} holds and hence that the proof is completed.
\end{proof}

\begin{rem}
In our proof, we have not used the fact that 
$\bar{\eta}_n^{N_{p_i},0}(\varphi)-\bar{\eta}_n^{N_{p_i-1},0}(\varphi)$ 
uses common samples
in $\bar{\eta}_n^{N_{p_i},0}(\varphi)$ and $\bar{\eta}_n^{N_{p_i-1},0}(\varphi)$. 
However, one can check that the fact that there are $N_{p_i}-N_{p_i-1}$ independent
and additional samples in the estimate $\bar{\eta}_n^{N_{p_i},0}(\varphi)$ means that there is not a substantial improvement in the bounds when incorporating these common samples
into computing the upper-bound.
\end{rem}

\subsection{Cost}

On inspection of the proof of Theorem \ref{theo:main_res} we needed to choose $\mathbb{P}_L$ and $\mathbb{P}_P$  and $(N_p)_{p\in\mathbb{Z}^+}$, so that
\begin{eqnarray*}
\sum_{p\in\mathbb{Z}^+} \frac{1}{\mathbb{P}_P(p)}\frac{1}{N_{p-1}\vee N_0}\Big(1+\frac{p^2}{N_{p-1}\vee N_0}\Big) & < & \infty\\
\sum_{l\in\mathbb{Z}^+}\frac{\Delta_l^{\beta}}{\mathbb{P}_L(l)} & < & \infty.
\end{eqnarray*} 
The expected cost of producing a single sample of the estimate \eqref{eq:ub_pf_est} is $\sum_{l\in\mathbb{Z}^+}\sum_{p\in\mathbb{Z}^+}\mathbb{P}_L(l)\mathbb{P}_P(p)
\Delta_l^{-1} N_p$. As a result it is unlikely that one can select $\mathbb{P}_L$ and $\mathbb{P}_P$  and $(N_p)_{p\in\mathbb{Z}^+}$ so that the estimator \eqref {eq:ub_pf_est} is
unbiased and of finite variance, but also of finite expected cost. So 
our estimate is in the sub-canonical regime of \cite{rhee}.

In our discussion, the cost will not consider the impact of the time parameter $n$ as our bounds in Proposition \ref{prop:pf_prop} and Theorem \ref{theo:cpf_res} have constants that grow exponentially with $n$.
As considered in \cite{cpf_clt}, we expect that the bounds can be made uniform in $n$, with a substantial increase in technical difficulty. 
Suppose that the diffusion coefficient $b$ is constant, so that $\beta=1$ and we set 
$N_p=N_0 2^p$, $\Delta_l=2^{-l}$, 
$\mathbb{P}_P(p)\propto N_p^{-1}(p+1)\log_2(p+2)^2$ and $\mathbb{P}_L(l)\propto 
\Delta_l^{-1}(l+1)\log_2(l+2)^2$. 
Then it easily follows that \eqref{eq:ub_pf_est} is an unbiased estimator of 
$\bar{\eta}_n(\varphi)$ with finite variance. 
Moreover, if one sets
$M=\mathcal{O}(\epsilon^{-2})$ (with $\epsilon>0$ arbitrary), so that the variance is $\mathcal{O}(\epsilon^{2})$, one can follow the analysis of \cite[pp.~1035]{rhee}, with some additional calculations, to establish that the cost to achieve this variance is $\mathcal{O}(\epsilon^{-2}|\log(\epsilon)|^{2+\delta})$ for any $\delta>0$. If one compares
to the methodology of \cite{mlpf} (the MLPF), as mentioned previously, to obtain a MSE of  $\mathcal{O}(\epsilon^{2})$, the cost is $\mathcal{O}(\epsilon^{-2}|\log(\epsilon)|^{2})$.
Therefore, unbiased estimator has a cost which is slightly larger than that of the MLPF.
We remark that the costs (both for the unbiased method and the MLPF) are determined
by the value of $\beta$, and that for the CPF adopted $\beta$ is half of the forward rate. These rates can be improved by the CPF in \cite{mlpf_new}, although there are at present no finite sample proofs about that technique.

\subsubsection{Estimator with a single randomization}
\label{subsec:singlerand}

Before moving to the numerical experiments, we briefly 
analyze the cost of Algorithm \ref{alg:sl_main_method}. 
Using the analysis above 
one can establish that for $\Xi_l$ as in Algorithm \ref{alg:sl_main_method}, for any $n\in\mathbb{Z}^+$, there exists a $C<+\infty$ such that for any 
$l\in\mathbb{N}$, $N_l>N_{l-1}>\cdots>N_0\geq 1$ and $\varphi\in\mathcal{B}_b(\mathsf{X})\cap\textrm{Lip}(\mathsf{X})$;
\begin{equation}\label{eq:xil_sing_est}
\mathbb{E}[\Xi_l^2] \leq C\|\varphi\|_{\infty}^2\Big(\frac{1}{N_l-N_{l-1}}+\frac{\Delta_l^{\beta}}{N_{l-1}}\Big(1+\frac{(l-1)^2}{N_{l-1}}\Big)\Big)
\end{equation}
where $\beta=\frac{1}{2}$ if $b$ is non-constant and $\beta=1$ if $b$ is constant. The expected cost of computing \eqref{eq:ub_pf_est_sl} is $\sum_{l\in\mathbb{Z}^+}\mathbb{P}_L(l)
2^l N_l$. One can check, that given the upper-bound in \eqref{eq:xil_sing_est} and the condition \eqref{eq:ub_fv_cond}, it is unlikely that one can find a $\mathbb{P}_L$ and an increasing sequence $(N_l)_{l\in\mathbb{Z}^+}$  so that the estimate is simultaneously unbiased with finite variance and has finite expected cost; this is again the sub-canonical regime of \cite{rhee}. If one chooses $N_l=N_0 2^l$ and, as in \cite[pp.~1035]{rhee} 
$\mathbb{P}_L(l)\propto 2^{-l} (l+1)\log_2(l+2)^2$, then one can show that to achieve a variance
of $\mathcal{O}(\epsilon^2)$ (for $\epsilon>0$ arbitrary), the order of the work is $\mathcal{O}(\epsilon^{-4}|\log(\epsilon)|^{4+\delta})$ for any $\delta >0$. This is extremely poor.
For instance, if the diffusion coefficient $b(X_t)$ is non-constant, 
then \cite{mlpf} show that the method there can achieve a mean square error of 
$\mathcal{O}(\epsilon^2)$ for a work of $\mathcal{O}(\epsilon^{-2.5})$, 
under suitable assumptions. 
As a result, we have decided not to use single randomization approaches here.

\section{Numerical Results}\label{sec:numerics}

\subsection{Model Settings} 
The numerical performance of our unbiased estimator (9) will be compared with that of MLPF (see \cite{mlpf}), with four examples of diffusions considered in this paper. Recall that the diffusions take the following form
$$
	dZ_{t} = a(Z_{t})dt + b(Z_{t})dW_{t}, \,\,\,\,\,\,\,\,\,\,Z_{0} = x^{*} 
$$
with $Z_{t} \in \mathbb{R}^{d}, t \geq 0,$ and $\{W_{t}\}_{t\geq0}$ a Brownian motion of appropriate dimension. 
We also set $(X_0,X_1,\dots)$ a discrete time skeleton of the process $\{Z_t\}_{t\geq 0}$ at lag 1 times i.e.~$X_k=Z_{k+1}$, $k\in\{0,1,\dots\}$. 
In addition, data $(y_{1},...\,,y_{n})$ are available with $Y_{k}$ obtained at time $k\in\{1,2,\dots\}$, and  $Y_{k}|X_{k-1}$ has density function $G(x_{k-1},y_{k})$. The objective is the estimation of expectations (the function is denoted $\varphi:\mathbb{R}^d\rightarrow\mathbb{R}$) w.r.t.~the filter.

To obtain a data set $(y_1,\dots,y_n)$, $n=100$, we either generate a signal from the diffusion (if possible) or an Euler discretization of the diffusion at level 9 and then
generate data from the density function $G(x_{k-1},y_k)$. Below are the detailed settings of the four diffusions models we will be considering in our simulation; throughout $d=1$ (the dimension of the hidden diffusion).

	 
 \textbf{Ornstein-Uhlenbeck Process}\,\,\,\,\,First, consider the following OU process,
\begin{equation*}
dZ_{t} = \theta(\mu-Z_{t})dt + dW_{t}, \\
\end{equation*}
\begin{equation*}
Y_{k}|X_{k-1} \sim \mathcal{N}(x_{k-1},\tau^{2}), \,\,\,\,\, \varphi(x) = x \, .
\end{equation*}
The exact value of the first moment of the filter can be computed using a Kalman filter. The constants in the example are, $Z_{0}=0$, $\theta=1$, $\mu=0$, $\tau^{2}=0.2$. \\

\textbf{Geometric Brownian Motion}\,\,\,\,\, Next consider the GBM process,
\begin{equation*}
dZ_{t} = \mu Z_{t}dt + \sigma X_{t} dW_{t} \\
\end{equation*}
\begin{equation*}
Y_{k}|X_{k-1} \sim \mathcal{N}(\log(x_{k-1}),\tau^{2}), \,\,\,\,\, \varphi(x) = x \, .
\end{equation*}
The transition density of the diffusion is available analytically. The constants are, $Z_{0} = 1$, $\tau^{2}=0.01$, $\sigma=0.2$ and $\mu = 0.02$. \\

\textbf{Langevin Stochastic Differential Equation}\,\,\,\,\, Here the SDE is given by
\begin{equation*}
dZ_{t} = \frac{1}{2}\nabla \log\pi(Z_{t})dt + dW_{t} \\
\end{equation*}
\begin{equation*}
Y_{k}|X_{k-1} \sim \mathcal{N}(0,e^{x_{k-1}}), \,\,\,\,\, \varphi(x) = x
\end{equation*}
where $\pi(x)$ denotes a probability density function. The density $\pi(x)$ is chosen as the Student's t-distribution with degrees of freedom \emph{v} = 10. Initial value $Z_{0}=0$.  \\

\textbf{An SDE with a Non-Linear Diffusion Term}\,\,\,\,\, Last, the following SDE is considered,
\begin{equation*}
dZ_{t} = \theta(\mu-Z_{t})dt + \frac{1}{\sqrt{1+Z_{t}^{2}}} dW_{t} \\
\end{equation*}
\begin{equation*}
Y_{k}|X_{k-1} \sim \mathcal{L}(\log(x_{k-1}),s), \,\,\,\,\, \varphi(x) = x
\end{equation*}
The constants are $Z_{0}=0$, $\theta=1$, $\mu=0$, and $s=\sqrt{0.1}$. This example is abbreviated NLD in the remainder of this section.   \\

\subsection{Simulation Settings}
 In our simulation, we applied Algorithm \ref{alg:main_method} to obtain the unbiased estimator 
 $\frac{1}{M}\sum_{i=1}^M \frac{1}{\mathbb{P}_L(l_i)}\Xi_{l_i,p_i}$.
 We use the Wasserstein coupled resampling method from \cite{mlpf_new} (see also \cite{cpf_clt}) 
 in place of Algorithm \ref{alg:cpf} to get the inner increment, which is expected (but not proven) 
 to yield the improved rate $\beta=2$ (resp.~$\beta=1$), as in Theorem \ref{theo:cpf_res},  for constant (resp.~non-constant) diffusion coefficients.
 As a comparison, we will simulate the MLPF algorithm (implementation of the MLPF algorithm is detailed in \cite[Section 5]{mlpf}) with the Wasserstein resampling method.
If the true value of the filter is not available, we will use a particle filter at level 13 (or using the exact diffusion dynamics, if available), with a large number of particles ($10^5$, repeated 100 times) to approximate its value - this will be the reference solution we use in our simulations.
 
 
The MLPF method will induce a bias when estimating the filter, which we denote $B_L$ and a variance $V_L$, where $L$ is the chosen level of discretization of the diffusion process.
In the MLPF method
 one must choose the number of samples used to approximate the differences of the filters at levels $l$ and $l-1$, denoted $M_l$. Our target MSE will be 
 $\mathcal{O}(2^{-2L})$. In the case that the diffusion coefficient is constant (resp.~non-constant) we set $M_{l} = \mathcal{O}(2^{2L-1.5l})$ (resp.~$M_{l} = \mathcal{O}(2^{2L-l}L)$).
  In practice we assume MSE $= C_{0}\,2^{-2L} = B_{L}^2+V_{L}$, we fit the constant $C_{0}$ by using the simulation results of the MLPF algorithm up-to discretization level $L$, where we set $M_{l} = C_{1}2^{2L-l}L$ (resp. ~$M_l=C_{1}2^{2L-1.5l}$) for diffusions with constant (resp.~non-constant) diffusion coefficients. The constant $C_{1}$ is tuned so that $B_{L}$ and $V_{L}$ is balanced and is roughly equal to each other.  These latter quantities are estimated by repeating the MLPF algorithm 100 times. 

  For the unbiased estimator, we have to truncate the values of $P$ and $L$ in practice, since huge values of either random variable cannot be feasibly processed in a
  reasonable amount of time. To obtain an MSE $=C_{0}2^{-2L}$ for the unbiased estimator, we choose an $L_{\textrm{max}}$ (the maximum value of $L$) value such that MSE level of the unbiased estimator can drop below $C_{0}2^{-2L}$. To specify the joint distribution of $P$ and $L$ we will allow  $(L,P)\in\{0,1,\dots,L_{\textrm{max}}\}\times \{0,1,\dots,P_{\textrm{max}}\}$ and detail $\mathbb{P}_L(l)$ and then $\mathbb{P}_{P|l}(p|l)$.
  We set $\mathbb{P}_L(l)\propto 2^{-1.5l}\mathbb{I}_{\{0,1,\dots,L_{\textrm{max}}\}}(l)$. Then 
  $$
  \mathbb{P}_{P|l}(p|l) \propto
    \left\{\begin{array}{ll}
    2^{4-p} & \textrm{if}~p\in\{0,1,\dots, 4\wedge(L_{\textrm{max}}-l)\} \\
    2^{-p}p[\log_{2}(p)]^2& \textrm{if}~p\in\{5,6,\dots, (L_{\textrm{max}}-l)\}~\textrm{and}~(L_{\textrm{max}}-l)\geq 5 \\
    0 & \textrm{otherwise}
     \end{array}\right.
  $$
  The distribution is chosen in this way so that the induced bias (i.e.~lowest achievable MSE)
  is comparable to the MLPF method and such that the cost to achieve the target MSE is 
  again comparable. Of course the estimator is no longer unbiased, however 
  if one can choose the target MSE ahead of time then such appropriately chosen bias 
  is inconsequential for the ultimate estimator \eqref{eq:ub_pf_est}, 
  and the cost to obtain this estimator 
  is lower than the genuinely unbiased one. 
   The samples of \eqref{eq:ub_pf_est} are still i.i.d.~and so the method can 
  still be easily parallelized. The value of $N_0$ for the choice of $(N_p)_{p\in\mathbb{Z}^+}$ ($N_p=N_0 2^p$) is 10 for the case of a constant diffusion coefficient and 50 in the non-constant case.
%

  The MSE  of our (truncated) unbiased estimator is $C_{2} 2^{-2L_{\textrm{max}}} + C_{3}M^{-1}$ and we use a few $L_{\textrm{max}}$ values to run the unbiased estimator, to obtain approximate bias and variance values and thus estimated values of $C_{2}$ and $C_{3}$. Based upon these values we can find the appropriate values of $L_{\textrm{max}}$ and $M$ such that a given MSE level is obtained, with balanced square bias and variance; this means we should have that $C_{2}2^{-2 L_{\textrm{max}}} \approx C_{3}M^{-1}$. In the simulation section, we fit only $C_{2}$ such that $C_{2} 2^{-2 L_{\textrm{max}}}$ is smaller than the target MSE level, then we increase $M$ until the approximate MSE of the unbiased estimator hits the target. This is possible because the variance goes to zero as $M$ goes to infinity. Since the unbiased estimator consists of $M$ i.i.d.~realizations of $\frac{1}{\mathbb{P}_L(l_i)}\Xi_{l_i,p_i}$, after choosing the value of $L_{\textrm{max}}$ based on a given MSE, we can simulate a large number (say $10^6$) of i.i.d.~$\frac{1}{\mathbb{P}_L(l_i)}\Xi_{l_i,p_i}$, then for a certain $M$ we compute (say) 100 estimates (with no overlap of $\frac{1}{\mathbb{P}_L(l_i)}\Xi_{l_i,p_i}$) so as to estimate the bias and variance in our results.
  
    

   The aim is to compare the two estimators cost at same MSE level. 
   We first simulate the MLPF algorithm with $L \in \{1,2,3,4,5,6,7\}$ and obtain the respective MSE and cost values. Then we apply the unbiased estimator to obtain the same MSE levels and record its cost value.  As we mentioned in the last paragraph, given a proper $L_{\textrm{max}}$ we can simulate a large number of realizations and then obtain respective MSE values for different $M$ values (which corresponds to different cost values). Consequently, we can then interpolate the MSE values onto a uniform cost grid. In the simulation, we choose the $L_{\textrm{max}}$ such that MSE for the unbiased estimator can drop below the MSE for the MLPF algorithm with $L=7$, then obtain an interpolated plot of MSE against cost. From the MSE-cost graph, we extract the cost required for the unbiased estimator to obtain a matching MSE to that of the MLPF, and this will allow us to compare their cost.



\subsection{Simulation Results}

\begin{figure}[h]\centering
\subfigure[OU]{\includegraphics[width=8cm,height=6cm]{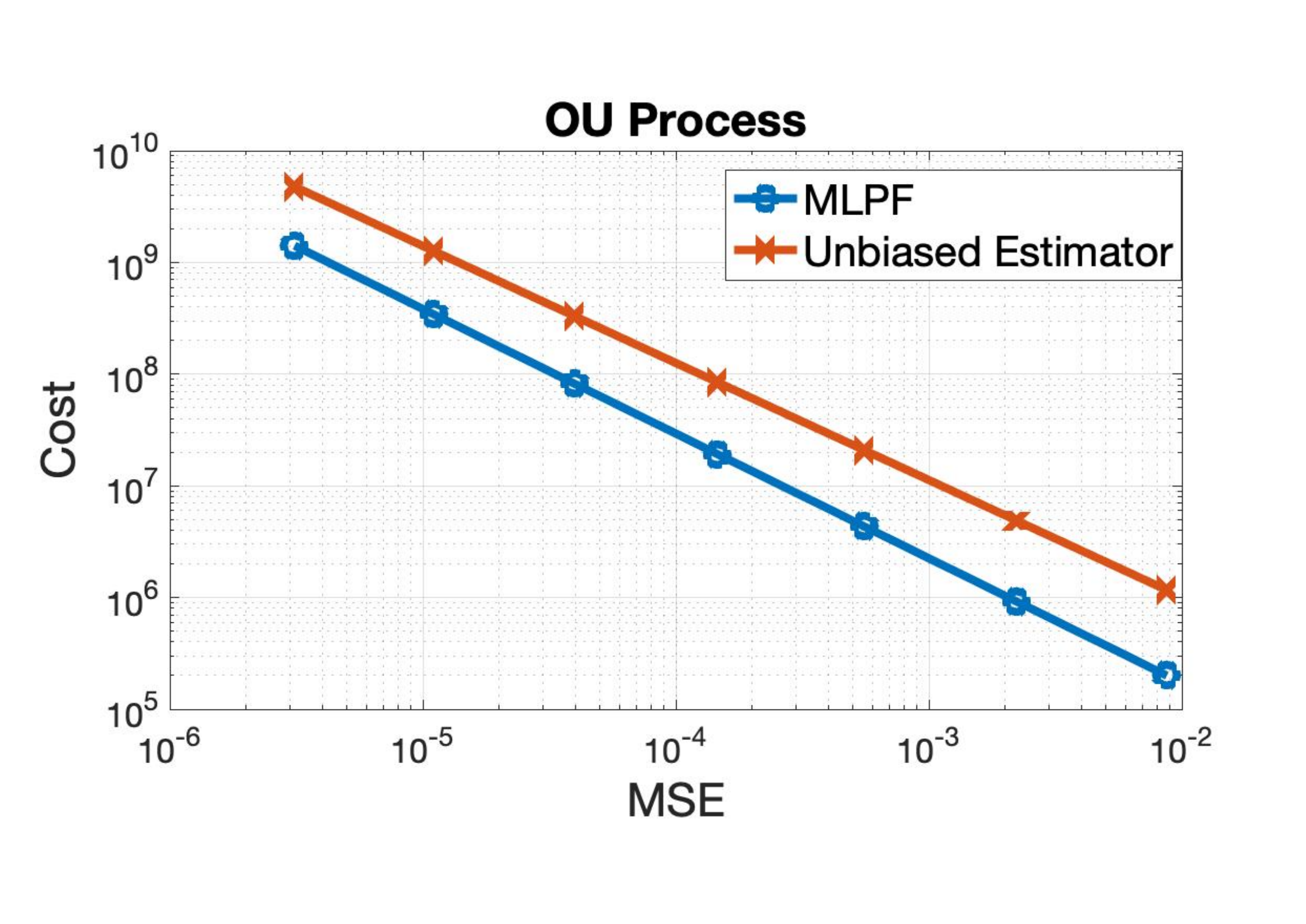}}
\subfigure[Langevin]{\includegraphics[width=8cm,height=6cm]{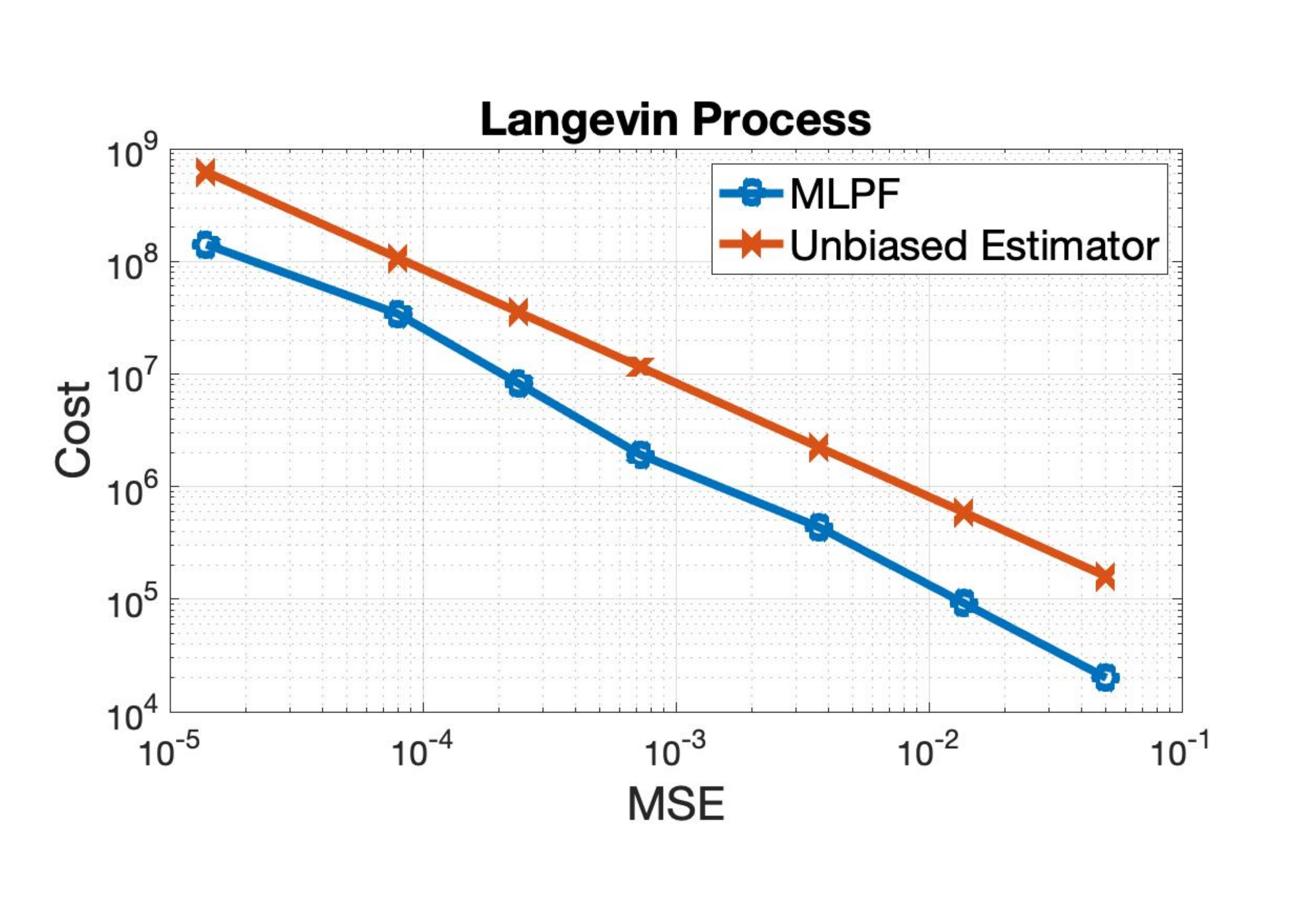}}
\subfigure[Non-Linear]{\includegraphics[width=8cm,height=6cm]{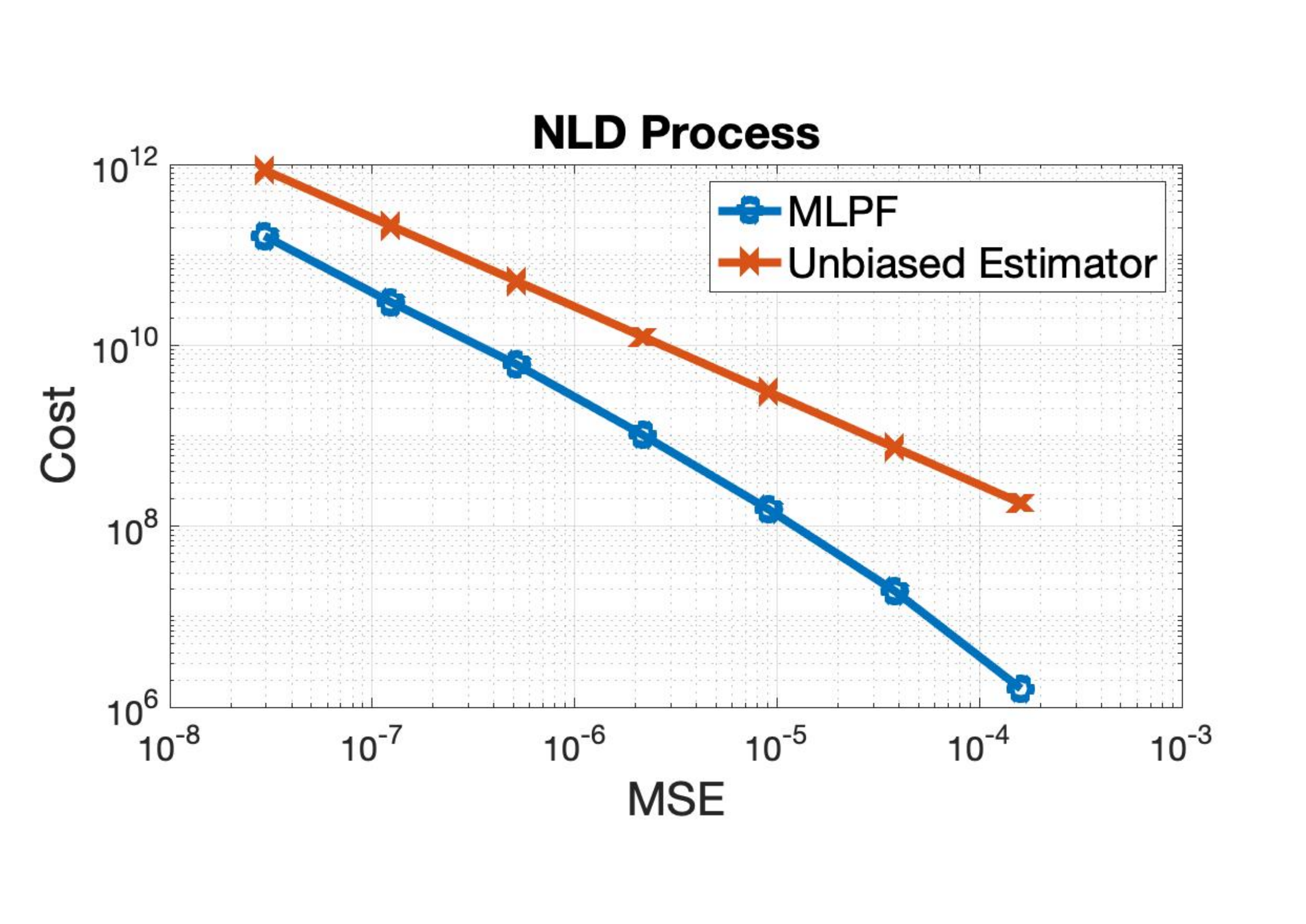}}
\subfigure[Geometric Brownian Motion]{\includegraphics[width=8cm,height=6cm]{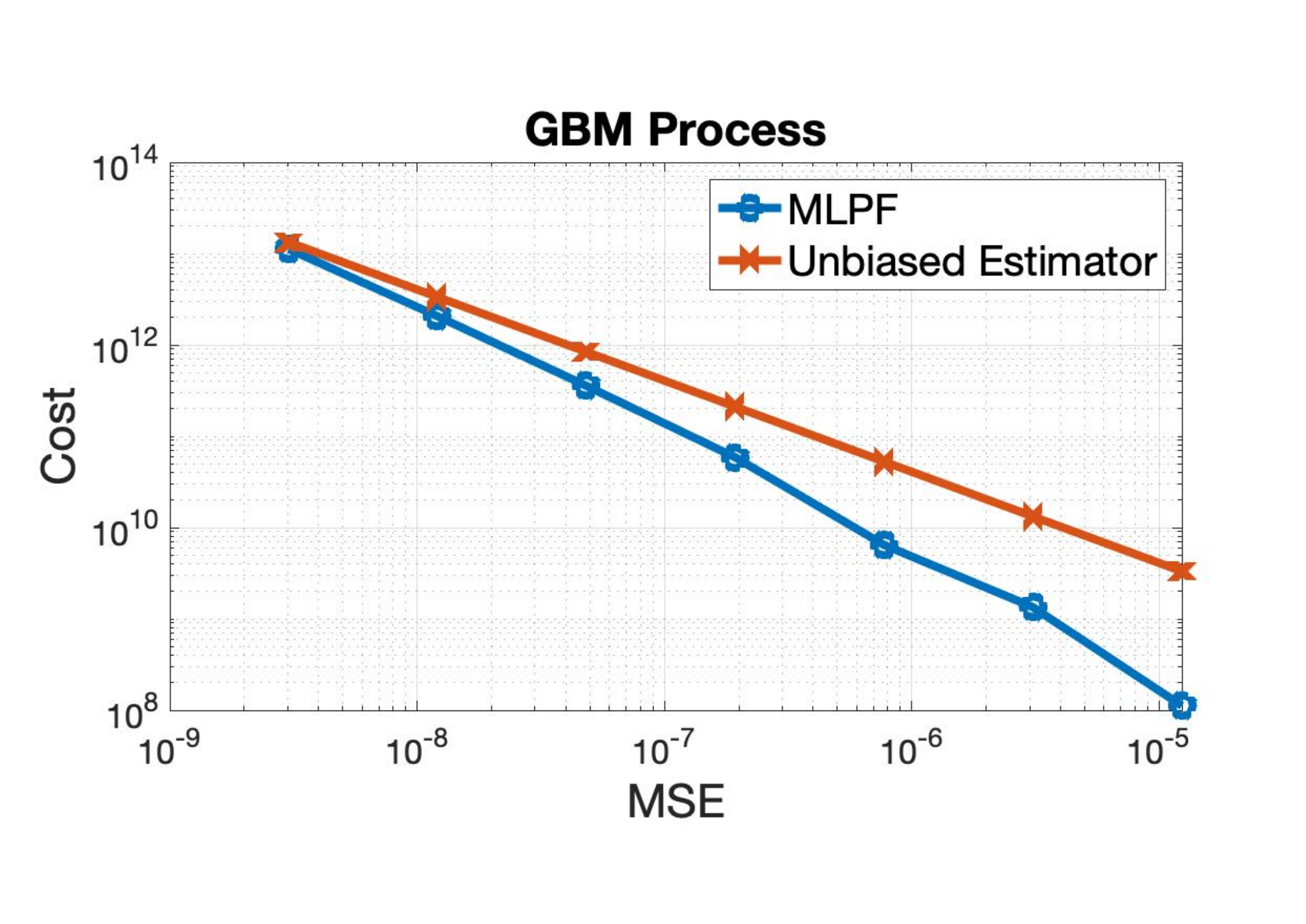}}
\caption{Cost against MSE plots. The UPF estimator is in red and MLPF estimator is in black.}\label{fig:res}	
\end{figure}

\begin{table}[h]
  \begin{center}
    \caption{Average Cost Ratio of Unbiased to MLPF}  \label{tab:tab1}
    \label{tab:table1}
    \begin{tabular}{l|r} 
      \textbf{Model} & \textbf{Cost Ratio}\\
      \hline
      OU & 3.80\\
      Langevin & 3.23\\
      NLD & 7.21\\
      GBM &  2.19\\
    \end{tabular}
  \end{center}
\end{table}

Our  results are presented in Figure \ref{fig:res} and Table \ref{tab:tab1} and concern the estimation of the filter at time 100.
Cost here is represented by the total number of discretized Euler updating steps in the simulation process. The average cost ratio (as in Table \ref{tab:tab1}) is the average  
of the ratio of the cost of \eqref{eq:ub_pf_est} 
to MLPF at same MSE levels over the last four (most precise discretization) values which are presented in Figure \ref{fig:res}. We also display rate plots (MSE vs Cost) for each model, where each plot has two lines in it representing the MLPF estimate calculated at $L\in\{1,2,\dots,7\}$ and 
the associated estimate \eqref{eq:ub_pf_est}. The plots are all on the $\log_{10}$-scale. 
 
 In Figure \ref{fig:res},  for the MLPF algorithm, we expect to see the non-asymptotic rate of $\frac{\log_{10}(\textrm{Cost})}{\log_{10}(\textrm{MSE})}$, which will be different from the asymptotic rate due to dominating effect of terms that are of order one; as the discretization level $l$ grows, the rate will converge to the asymptotic rate (for instance as in \cite{mlpf}). This effect is present in the simulation results, which shows a changing slope (this is more obvious for diffusion models with non-constant diffusion coefficient) for the line of the MLPF estimator.

We observe from the simulation results that 
the cost for  \eqref{eq:ub_pf_est} 
is higher than MLPF (the cost ratio range from 3 to 7) at the same MSE level. 
However parallel computing could make our estimator more appealing by reducing the actual computing time. If one has access to $K$ computers, then running  \eqref{eq:ub_pf_est} 
in parallel will reduce the computing time by a factor of $K$. For the Langevin or OU model, we need only a $K$ larger than 3 and the actual computing time using  \eqref{eq:ub_pf_est} 
to obtain estimates with the same MSE level will be shorter than that of MLPF. Even for the NLD model, a $K$ larger than 7 makes the unbiased estimator method more appealing since it requires less computing time to obtain the same MSE level. 
The parallelizability of the method allows us to embrace the possibilities provided by a new generation of massively parallel accelerator devices such as a graphics processing unit, Intel's Xeon Phi or even Field Programmable Gate Array. In the context of parallel computing, where one may typically have access to hundreds or thousands of nodes or more, 
each with multiple cores, \eqref{eq:ub_pf_est} 
could massively shorten the computing time, allowing us to generate very high accuracy estimators 
with speed that would be challenging to obtain using the MLPF by itself.

\subsubsection*{Acknowledgements}
A.J. \& F.Y. were supported by KAUST baseline funding.
K.J.H.L. \& A.J. were supported by the U.S. Department of Energy, Office of Science, 
Office of Advanced Scientific Computing Research (ASCR), 
under field work proposal number ERKJ333.

\appendix

\section{Proofs of Main Results}\label{sec:proofs}

The appendix is split into three sections. Section \ref{app:disc_conv} contains the proof of Proposition \ref{prop:euler_pred_conv}, Section \ref{app:pf_prf}
the proof of Proposition \ref{prop:pf_prop} and Section \ref{app:cpf_prf} the proof of Theorem \ref{theo:cpf_res}. Section \ref{app:cpf_prf} consists of a collection of
definitions and technical results which build up to the proof of the main theorem at the end.

\subsection{Proposition \ref{prop:euler_pred_conv}}\label{app:disc_conv}

\begin{proof}[Proof of Proposition \ref{prop:euler_pred_conv}]
We give the proof for the predictor; the filter follows directly from this.
Our proof is by induction on $n$. In the case $n=0$ we have for any $l\geq 0$
\begin{equation}\label{eq:weak_error}
|[\eta_0^l-\eta_0](\varphi)| =|[M^l-M](\varphi))(x^*)| \leq C\Delta_l\|\varphi\|_{\infty}
\end{equation}
for some $C<+\infty$ that does not depend upon $l$, where we have used the weak error for Euler approximations (see e.g.~\cite[eq.~(2.4)]{delm:01}), hence the initialization is verified.

Assume the result at rank $n-1$, then one has 
$$
[\eta_n^l-\eta_n](\varphi) = T_1 + T_2+T_3
$$
where
\begin{eqnarray*}
T_1 & := & \frac{1}{\eta_{n-1}^l(G_{n-1})}\eta_{n-1}^l(G_{n-1}[M^l-M](\varphi)) \\
T_2 & := & \frac{1}{\eta_{n-1}^l(G_{n-1})}[\eta_{n-1}^l-\eta_{n-1}](G_{n-1}M(\varphi)) \\
T_3 & := & \frac{\eta_{n-1}(G_{n-1}M(\varphi))}{\eta_{n-1}^l(G_{n-1})\eta_{n-1}(G_{n-1})}[\eta_{n-1}-\eta_{n-1}^l](G_{n-1}).
\end{eqnarray*}
For $T_2$ and $T_3$, as $l\rightarrow\infty$, they converge to zero by the induction hypothesis (recall $G_n\in\mathcal{B}_b(\mathsf{X})$ for every $n\geq 0$ by assumption). For $T_1$
$$
|\eta_{n-1}^l(G_{n-1}[M^l-M](\varphi))| \leq \|G_{n-1}\|_{\infty}\int_{\mathsf{X}}|[M^l-M](\varphi)(x)|\eta_{n-1}^l(dx).
$$
Applying \eqref{eq:weak_error} allows us to conclude that $T_1$ converges to zero as $l\rightarrow\infty$ (the denominator converges by the induction hypothesis). This completes the proof. 
\end{proof}

\subsection{Proposition \ref{prop:pf_prop}}\label{app:pf_prf}

To prove Proposition \ref{prop:pf_prop}, we give the following preliminary result.

\begin{prop}\label{prop:pf_prop_1}
For any $n\geq 0$ there exists a $C<+\infty$ such that for any $p\geq 0$, $N_p\geq 1$, $\varphi\in\mathcal{B}_b(\mathsf{X})$:
$$
\mathbb{E}_c\Big[\Big(\eta_{n}^{N_p,0}(\varphi)-\eta_n^0(\varphi)\Big)^2\Big] \leq \frac{C\|\varphi\|_{\infty}^2}{N_p}\Big(1+\frac{p^2}{N_p}\Big).
$$
\end{prop}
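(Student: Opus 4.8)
The plan is to read $\eta_n^{N_p,0}(\varphi)$ as the weighted combination $\eta_n^{N_{0:p},0}(\varphi)=\sum_{q=0}^p w_q\,\eta_n^{m_q,0}(\varphi)$ introduced above, where $m_q:=N_q-N_{q-1}$ (with $N_{-1}:=0$) and $w_q:=m_q/N_p$, and to exploit two structural facts: the weights telescope, $\sum_{q=0}^p w_q=1$, and under $\mathbb{P}_c$ the $p+1$ constituent particle filters are \emph{mutually independent}. The only external input I would invoke is the pair of classical single-filter estimates for a PF run with $N\geq 1$ particles and a bounded test function, with a constant $C=C_n$ depending on $n$ but not on $N$ (see e.g.~\cite{delm:13}): the variance bound
$$
\mathbb{E}_c\big[(\eta_n^{N,0}(\varphi)-\mathbb{E}_c[\eta_n^{N,0}(\varphi)])^2\big]\leq \frac{C\|\varphi\|_\infty^2}{N},
$$
together with the bias bound $|\mathbb{E}_c[\eta_n^{N,0}(\varphi)]-\eta_n^0(\varphi)|\leq C\|\varphi\|_\infty/N$. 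Everything after this is elementary bookkeeping with the convex weights, so producing (or precisely citing) these two $N$-independent estimates is really the crux.

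I would then split the mean-squared error into variance and squared bias,
$$
\mathbb{E}_c\big[(\eta_n^{N_p,0}(\varphi)-\eta_n^0(\varphi))^2\big]=\mathrm{Var}_c\big(\eta_n^{N_p,0}(\varphi)\big)+\big(\mathbb{E}_c[\eta_n^{N_p,0}(\varphi)]-\eta_n^0(\varphi)\big)^2,
$$
and treat the two pieces separately. For the variance, independence of the $p+1$ filters gives $\mathrm{Var}_c(\eta_n^{N_p,0}(\varphi))=\sum_{q=0}^p w_q^2\,\mathrm{Var}_c(\eta_n^{m_q,0}(\varphi))$; applying the single-filter variance bound and using the identity $w_q^2/m_q=m_q/N_p^2$, this is at most $(C\|\varphi\|_\infty^2/N_p^2)\sum_{q=0}^p m_q=C\|\varphi\|_\infty^2/N_p$, since $\sum_{q=0}^p m_q=N_p$. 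This supplies the leading $N_p^{-1}$ term in the claimed bound.

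For the squared bias, linearity gives $\mathbb{E}_c[\eta_n^{N_p,0}(\varphi)]-\eta_n^0(\varphi)=\sum_{q=0}^p w_q b_q$ with $|b_q|\leq C\|\varphi\|_\infty/m_q$, whence $\big|\sum_q w_q b_q\big|\leq\sum_q (m_q/N_p)(C\|\varphi\|_\infty/m_q)=C(p+1)\|\varphi\|_\infty/N_p$ and the squared bias is at most $C^2(p+1)^2\|\varphi\|_\infty^2/N_p^2$. Using $(p+1)^2\leq 2(p^2+1)$ and $N_p\geq 1$, this is bounded by $(C\|\varphi\|_\infty^2/N_p)(p^2/N_p)+C\|\varphi\|_\infty^2/N_p$ up to an absolute constant; combining with the variance estimate and absorbing all constants into a single $C$ yields $\mathbb{E}_c[(\eta_n^{N_p,0}(\varphi)-\eta_n^0(\varphi))^2]\leq (C\|\varphi\|_\infty^2/N_p)(1+p^2/N_p)$, as required. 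The main obstacle is thus entirely in the first step: securing the $\mathcal{O}(1/N)$ variance and $\mathcal{O}(1/N)$ bias for a single PF with constants uniform in $N$ (and tracking the $n$-dependence of $C_n$); the weighted recombination then costs nothing but a short computation.
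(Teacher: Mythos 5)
Your proof is correct and takes essentially the same approach as the paper: both rest on the mutual independence of the $p+1$ constituent particle filters under $\mathbb{P}_c$ together with the classical single-filter $\mathcal{O}(1/N)$ variance ($L_2$) and $\mathcal{O}(1/N)$ bias bounds, which the paper obtains from \cite[Proposition 9.5.6]{delm:13}. The only cosmetic difference is in grouping: the paper expands the square of the weighted sum directly, so the cross terms appear as products of single-filter biases, whereas you organize the identical terms as variance plus squared bias of the aggregate estimator; the resulting bounds coincide.
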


\begin{proof}
We have
$$
\mathbb{E}_c\Big[\Big(\eta_{n}^{N_p,0}(\varphi)-\eta_n^0(\varphi)\Big)^2\Big] = 
\sum_{k=0}^p\Big(\frac{(N_k-N_{k-1})}{N_p}\Big)^2\mathbb{E}\Big[\Big(\eta_{n}^{N_k-N_{k-1},0}(\varphi)-\eta_n^0(\varphi)\Big)^2\Big] + 
$$
$$
\sum_{k\neq s}
\Big(\frac{(N_k-N_{k-1})}{N_p}\Big)\Big(\frac{(N_s-N_{s-1})}{N_p}\Big)
\mathbb{E}[\eta_{n}^{N_k-N_{k-1},0}(\varphi)-\eta_n^0(\varphi)] \mathbb{E}[\eta_{n}^{N_s-N_{s-1},0}(\varphi)-\eta_n^0(\varphi)]
$$
where $\mathbb{E}$ is an expectation w.r.t.~law associated to a particle filter. \cite[Proposition 9.5.6]{delm:13} yields that
$$
\mathbb{E}_c\Big[\Big(\eta_{n}^{N_p,0}(\varphi)-\eta_n^0(\varphi)\Big)^2\Big] \leq C\|\varphi\|_{\infty}^2\Big(\sum_{k=0}^p\frac{(N_k-N_{k-1})}{N_p^2} + \sum_{k\neq s}\frac{1}{N_p^2}\Big)
$$
from which the proof can easily be concluded.
\end{proof}

\begin{proof}[Proof of Proposition \ref{prop:pf_prop}]
We have that
$$
\frac{\eta_{n}^{N_p,0}(G_n\varphi)}{\eta_{n}^{N_p,0}(G_n)}-\bar{\eta}_n^0(\varphi)
 = \frac{\eta_{n}^{N_p,0}(G_n\varphi)}{\eta_{n}^{N_p,0}(G_n)\eta_n^0(G_n)}\Big(\eta_n^0(G_n)-\eta_n^{N_p,0}(G_n)\Big) + 
\frac{1}{\eta_n^0(G_n)}\Big(\eta_{n}^{N_p,0}(G_n\varphi)-\eta_{n}^{0}(G_n\varphi)\Big).
$$
The proof can now easily be completed using Proposition \ref{prop:pf_prop_1} along with $\varphi\in\mathcal{B}_b(\mathsf{X})$.
\end{proof}

\subsection{Theorem \ref{theo:cpf_res}}\label{app:cpf_prf}

To prove Theorem \ref{theo:cpf_res}, we require some notations. Denote the sequence of non-negative kernels $\{Q_n^s\}_{n\geq 1}$, $s\in\{l,l-1\}$, $Q_n^s(x,dy) = G_{n-1}(x) M^s(x,dy)$ and for $B\in\mathcal{X}$, $x_p\in\mathsf{X}$
$$
Q_{p,n}^s(B)(x_p) = \int_{\mathsf{X}^{n-p}} \mathbb{I}_B(x_n) \prod_{q=p}^{n-1} Q_{q+1}^s(x_q,dx_{q+1})
$$
$0\leq p<n$ and in the case $p=n$, $Q_{p,n}^s$ is the identity operator.
Now denote for $0\leq p<n$, $s\in\{l,l-1\}$, $B\in\mathcal{X}$, $x_p\in\mathsf{X}$
$$
D_{p,n}^s(B)(x_p) = \frac{Q_{p,n}^s(\mathbb{I}_B - \eta_n^s(B))(x_p)}{\eta_p^s(Q_{p,n}^s(1))}
$$
in the case $p=n$, $D_{p,n}^s(B)(x)=\mathbb{I}_B-\eta_n^s(B)$.  Let $n\geq 1$, $B\in\mathcal{X}$ and $\mu\in\mathscr{P}(\mathsf{X})$ and define the probability measure:
$$
\Phi_n^l(\mu)(B) = \frac{\mu(G_{n-1}M^l(B))}{\mu(G_{n-1})}.
$$
Throughout the section $C$ is a finite and positive constant that does not depend upon $l$ and whose value may change on each appearance.

For $(\mathsf{X},\mathcal{X})$ a measurable space $(\mu,\nu)\in\mathcal{P}(\mathsf{X})^2$, the total variation distance  is written $\|\mu-\nu\|_{\textrm{tv}}=\sup_{A\in\mathcal{X}}|\mu(A)-\nu(A)|$. We start with a technical result that will be used below.

\begin{lem}\label{lem:av4}
Assume (A\ref{hyp:1}-\ref{hyp:2}). Then for any $n\geq 1$, $0\leq p < n$
there exist a $C<+\infty$ such that for any $(l,x,y)\in\mathbb{N}\times\mathsf{X}\times\mathsf{X}$, $\varphi\in \textrm{\emph{Lip}}(\mathsf{X})\cap\mathcal{B}_b(\mathsf{X})$
$$
|D_{p,n}^l(\varphi)(x)-D_{p,n}^{l-1}(\varphi)(y)| \leq
C\|\varphi\|_{\infty}\Big(\|x-y\|\wedge 1+\|\eta_p^l-\eta_p^{l-1}\|_{\textrm{\emph{tv}}}+\|\eta_n^l-\eta_n^{l-1}\|_{\textrm{\emph{tv}}}+|\|M_n^{l,l-1}\||\Big)
$$
where $C$ does not depend on $\eta_p^l,\eta_p^{l-1},\eta_n^l,\eta_n^{l-1}$ and $|\|M_n^{l,l-1}\||=\sup_{\{\varphi\in\textrm{\emph{Lip}}(\mathsf{X})\cap\mathcal{B}_b(\mathsf{X}:\|\varphi\|_{\infty}\leq 1\}}\sup_{x\in\mathsf{X}}|M_n^l(\varphi)(x)-M_n^{l-1}(\varphi)(x)|$.
\end{lem}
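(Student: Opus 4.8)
The plan is to separate the dependence on the starting point from the dependence on the discretization level by writing
\[
D_{p,n}^l(\varphi)(x) - D_{p,n}^{l-1}(\varphi)(y) = \big[D_{p,n}^l(\varphi)(x) - D_{p,n}^l(\varphi)(y)\big] + \big[D_{p,n}^l(\varphi)(y) - D_{p,n}^{l-1}(\varphi)(y)\big],
\]
and to bound the two brackets separately. Before doing so I would record the elementary consequences of (A\ref{hyp:1})(i): since $c^{-1}<G_q<c$, one has $c^{-(n-p)}\leq Q_{p,n}^s(1)\leq c^{n-p}$, so the denominators $\eta_p^s(Q_{p,n}^s(1))$ are bounded above and below by positive constants depending only on $n-p$ and $c$, while $|Q_{p,n}^s(\varphi-\eta_n^s(\varphi))|\leq 2c^{n-p}\|\varphi\|_\infty$. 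This gives $\|D_{p,n}^s(\varphi)\|_\infty\leq C\|\varphi\|_\infty$ and hence the crude bound $|D_{p,n}^l(\varphi)(x)-D_{p,n}^{l-1}(\varphi)(y)|\leq C\|\varphi\|_\infty$, which already settles the regime $\|x-y\|\geq 1$; thus I only need the sharper Lipschitz behaviour when $\|x-y\|<1$, which is exactly the $\|x-y\|\wedge 1$ in the claim.

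For the first bracket the denominator cancels, so it suffices to prove $|Q_{p,n}^l(\psi)(x)-Q_{p,n}^l(\psi)(y)|\leq C\|\psi\|_\infty\|x-y\|$ for $\psi=\varphi-\eta_n^l(\varphi)$. I would establish, by induction on $n-p$, that $Q_{p,n}^l(\psi)\in\textrm{Lip}(\mathsf{X})\cap\mathcal{B}_b(\mathsf{X})$ with Lipschitz constant $\leq C\|\psi\|_\infty$. The single step acts as $Q_{q+1}^l(g)(x)=G_q(x)M^l(g)(x)$, and writing its increment as $G_q(x)[M^l(g)(x)-M^l(g)(y)]+[G_q(x)-G_q(y)]M^l(g)(y)$ and invoking (A\ref{hyp:1})(i)--(ii) together with the smoothing bound (A\ref{hyp:2}), namely $|M^l(g)(x)-M^l(g)(y)|\leq C\|g\|_\infty\|x-y\|$, gives the inductive step with a constant uniform in $l$. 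The key point is that (A\ref{hyp:2}) controls the Lipschitz constant of $M^l(g)$ by $\|g\|_\infty$, so regularity does not deteriorate with the number of steps and the constants stay independent of $l$.

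For the second bracket I would write $D_{p,n}^s(\varphi)(y)=A_s/B_s$ with $A_s=Q_{p,n}^s(\varphi-\eta_n^s(\varphi))(y)$ and $B_s=\eta_p^s(Q_{p,n}^s(1))$, and use $A_l/B_l-A_{l-1}/B_{l-1}=(A_l-A_{l-1})/B_l+A_{l-1}(B_{l-1}-B_l)/(B_lB_{l-1})$ together with the uniform two-sided bounds on $B_s$ and the upper bound on $A_s$. The engine for both $A_l-A_{l-1}$ and $B_{l-1}-B_l$ is the kernel-difference estimate
\[
|Q_{p,n}^l(g)(y)-Q_{p,n}^{l-1}(g)(y)|\leq C\|g\|_\infty\,|\|M_n^{l,l-1}\||, \qquad g\in\textrm{Lip}(\mathsf{X})\cap\mathcal{B}_b(\mathsf{X}),
\]
obtained from the telescoping $Q_{p,n}^l-Q_{p,n}^{l-1}=\sum_{q=p}^{n-1}Q_{p,q}^l(Q_{q+1}^l-Q_{q+1}^{l-1})Q_{q+1,n}^{l-1}$: in each summand the middle factor equals $G_q\,[M^l-M^{l-1}](Q_{q+1,n}^{l-1}(g))$, and since $Q_{q+1,n}^{l-1}(g)$ is Lipschitz and bounded (by the induction of the previous paragraph, with $Q_{n,n}^{l-1}(g)=g$ at the endpoint) the definition of $|\|M_n^{l,l-1}\||$ applies, while the outer $Q_{p,q}^l$ contributes only a factor $c^{q-p}$. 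Decomposing $A_l-A_{l-1}$ into a kernel part (giving $|\|M_n^{l,l-1}\||$) and a centring part $\eta_n^l(\varphi)-\eta_n^{l-1}(\varphi)$ (bounded by $C\|\varphi\|_\infty\|\eta_n^l-\eta_n^{l-1}\|_{\textrm{tv}}$ via the standard total-variation bound), and $B_{l-1}-B_l$ into a kernel part and the part $(\eta_p^{l-1}-\eta_p^l)(Q_{p,n}^l(1))$ (bounded by $C\|\eta_p^l-\eta_p^{l-1}\|_{\textrm{tv}}$), assembles exactly the three stated terms.

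The main obstacle is the bookkeeping that keeps every constant independent of $l$ and of the four measures $\eta_p^l,\eta_p^{l-1},\eta_n^l,\eta_n^{l-1}$. This hinges on applying $[M^l-M^{l-1}]$ only to functions whose Lipschitz character and sup-norm are controlled purely through (A\ref{hyp:2}) and $\|g\|_\infty$, which is precisely why the $l$-uniform Lipschitz regularity of $Q_{q+1,n}^{l-1}(g)$ must be secured first; everything else is routine decomposition and the uniform boundedness of $G$.
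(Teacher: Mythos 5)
You should know at the outset that the paper itself contains no proof of this lemma --- its ``proof'' is the single sentence ``The proof is given in \cite{mlpf_new} and is omitted'' --- so there is no in-paper argument to compare yours against; it has to be judged on its own, and judged that way it is correct and self-contained, using only objects defined in this paper. The load-bearing points are all present and sound: the splitting into a space increment $D_{p,n}^l(\varphi)(x)-D_{p,n}^l(\varphi)(y)$ plus a level increment $D_{p,n}^l(\varphi)(y)-D_{p,n}^{l-1}(\varphi)(y)$; the crude bound $\|D_{p,n}^s(\varphi)\|_\infty\leq C\|\varphi\|_\infty$ coming from $c^{-(n-p)}\leq Q_{p,n}^s(1)\leq c^{n-p}$, which simultaneously disposes of the regime $\|x-y\|\geq 1$ and makes every constant independent of the four measures (they enter only through denominators that are bounded above and below uniformly over $\mathcal{P}(\mathsf{X})$); the induction showing $Q_{p,n}^l(\psi)$ has Lipschitz constant $C\|\psi\|_\infty$ with $C$ uniform in $l$, which works precisely because (A\ref{hyp:2}) controls the Lipschitz constant of $M^l(g)$ by $\|g\|_\infty$ with an $l$-independent constant, so regularity does not degrade over the $n-p$ steps; the telescoping identity for $Q_{p,n}^l-Q_{p,n}^{l-1}$, whose middle factors are legitimately estimated by $|\|M_n^{l,l-1}\||$ because $Q_{q+1,n}^{l-1}(g)$ is Lipschitz and bounded (with the endpoint case $Q_{n,n}^{l-1}(g)=g$ handled); and the ratio decomposition of $A_l/B_l-A_{l-1}/B_{l-1}$, which produces exactly the two total-variation terms and the kernel-difference term. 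Two cosmetic remarks only. First, in the first bracket the denominator does not ``cancel''; it is the \emph{same} strictly positive quantity $\eta_p^l(Q_{p,n}^l(1))\geq c^{-(n-p)}$ for both terms, which is what your argument actually uses. Second, for the telescoping step the $l$-uniformity of the Lipschitz constants of $Q_{q+1,n}^{l-1}(g)$ is not needed: membership in $\textrm{Lip}(\mathsf{X})\cap\mathcal{B}_b(\mathsf{X})$ together with sup-norm control suffices, since the supremum defining $|\|M_n^{l,l-1}\||$ does not see the Lipschitz constant; the uniformity is genuinely needed only for the space-increment bracket. Neither remark affects correctness.
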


\begin{proof}
The proof is given in \cite{mlpf_new} and is omitted.
\end{proof}

The following result is for a CPF of $N$ samples with finite dimensional law $\mathbb{\check{P}}^N$ (expectations w.r.t.~$\mathbb{\check{P}}^N$ are written $\mathbb{\check{E}}^N$).

\begin{prop}\label{prop:cpf_res1}
Assume (A\ref{hyp:1}-\ref{hyp:2}). Then for any $n\in\mathbb{Z}^+$ there exists a $C<+\infty$ such that for any $(l,N)\in\mathbb{N}\times\mathbb{N}$, $\varphi\in\mathcal{B}_b(\mathsf{X})\cap\textrm{\emph{Lip}}(\mathsf{X})$:
$$
\Big|\mathbb{\check{E}}^N\Big[
[\eta_n^{N,l} - \eta_n^{N,l-1}](\varphi)
-[\eta_n^l-\eta_n^{l-1}](\varphi)\Big]\Big| \leq \frac{C\Delta_l^{\beta}\|\varphi\|_{\infty}}{N}
$$
where $\beta=\frac{1}{4}$ if $b$ is non-constant and $\beta=\frac{1}{2}$ if $b$ is constant.
\end{prop}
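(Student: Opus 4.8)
The plan is to bound the bias of each of the two marginals of the coupled particle filter separately, by means of a Del Moral-type semigroup decomposition, and to bring the coupling into play only at the final differencing stage through Lemma \ref{lem:av4}. Fix $s\in\{l,l-1\}$. I would first record that, by the construction of $\check{\Phi}_k^l$ (cf.~\cite[Proposition A.1]{mlpf}), the $s$-marginal of the coupled sampling law $\check{\Phi}_k^l(\check{\eta}_{k-1}^{N,l})$ coincides with $\Phi_k^s(\eta_{k-1}^{N,s})$, so that each marginal of $\mathbb{\check{P}}^N$ evolves in mean exactly as a level-$s$ particle filter. One then writes the telescoping identity
$$
[\eta_n^{N,s}-\eta_n^s](\varphi)=\sum_{p=0}^n [\eta_p^{N,s}-\Phi_p^s(\eta_{p-1}^{N,s})]\big(D_{p,n}^s(\varphi)\big)+\mathcal{R}_n^{N,s}(\varphi),
$$
with the convention $\Phi_0^s(\eta_{-1}^{N,s}):=\eta_0^s$, where the first sum is the linearisation in the local sampling fluctuations and $\mathcal{R}_n^{N,s}(\varphi)$ collects the second- and higher-order terms produced by the self-normalisation in $\Phi_p^s$. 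Conditioning on the particles up to time $p-1$ and using the mean property just noted, every summand of the first term is a martingale increment and so vanishes under $\mathbb{\check{E}}^N$; hence $\mathbb{\check{E}}^N\big[[\eta_n^{N,s}-\eta_n^s](\varphi)\big]=\mathbb{\check{E}}^N[\mathcal{R}_n^{N,s}(\varphi)]$, and the entire bias sits in the remainder.

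The second step is to show that each remainder is $O(1/N)$ and to expose its dependence on the kernels $D_{p,n}^s$. Expanding the normalised semigroup to second order, and using that $D_{p,n}^s$ is exactly its first derivative, the leading part of $\mathcal{R}_n^{N,s}(\varphi)$ is $\tfrac1N$ times a finite sum over $p\le n$ of conditional variance- and covariance-type quantities built from $D_{p,n}^s(\varphi)$ and $D_{p,n}^s(1)$, e.g.~terms of the form $\Phi_p^s(\eta_{p-1}^{N,s})\big((D_{p,n}^s(\varphi))^2\big)$ and $\Phi_p^s(\eta_{p-1}^{N,s})\big(D_{p,n}^s(\varphi)\,D_{p,n}^s(1)\big)$, while the contributions beyond second order are of smaller order in $1/N$. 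Assumption (A\ref{hyp:1})(i) keeps every normalising constant $\eta_p^s(Q_{p,n}^s(1))$ and its empirical analogue bounded away from $0$ and $\infty$ uniformly in $l$, which is what renders these estimates uniform and legitimises the expansion.

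The final and decisive step is to difference the two levels: the bias of the difference equals $\mathbb{\check{E}}^N[\mathcal{R}_n^{N,l}(\varphi)-\mathcal{R}_n^{N,l-1}(\varphi)]$, and I would rewrite each pair of quadratic terms through $a^2-b^2=(a-b)(a+b)$ (and the bilinear analogue for the cross terms) so that every summand becomes a bounded factor multiplied by a single increment $D_{p,n}^l(\varphi)(x)-D_{p,n}^{l-1}(\varphi)(x')$, evaluated at a coupled particle pair $(x,x')=(X_p^{i,l},X_p^{i,l-1})$. Lemma \ref{lem:av4} then bounds this increment by $C\|\varphi\|_\infty\big(\|x-x'\|\wedge1+\|\eta_p^l-\eta_p^{l-1}\|_{\textrm{tv}}+\|\eta_n^l-\eta_n^{l-1}\|_{\textrm{tv}}+|\|M_n^{l,l-1}\||\big)$, and taking $\mathbb{\check{E}}^N$ and inserting the coupled Euler error estimates—the first-moment coupling bound $\mathbb{\check{E}}^N[\|X_p^{i,l}-X_p^{i,l-1}\|\wedge1]=O(\Delta_l^\beta)$ together with the weak-type bounds on the three remaining quantities, all $O(\Delta_l^\beta)$—produces exactly one factor of $\Delta_l^\beta$; summing over the finite range $0\le p\le n$ and absorbing the $n$-dependence into $C$ gives the claimed $C\Delta_l^\beta\|\varphi\|_\infty/N$. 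The main obstacle is precisely this extraction: one must make the second-order remainder analysis rigorous and uniform, and then organise the difference of the quadratic forms so that only a \emph{single} power of the coupling error survives. This yields the first-moment rate $\beta$, one half of the exponent appearing in the $L^2$ bound of Theorem \ref{theo:cpf_res}; concretely $\beta=1/4$ for non-constant $b$ and $\beta=1/2$ for constant $b$, according to the strong order of the Euler coupling.
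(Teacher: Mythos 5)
Your skeleton matches the paper's up to the point where the real work begins: the paper likewise writes the coupled difference as a martingale part plus remainder built from the centred kernels $D_{p,n}^s$, notes the martingale part has zero expectation, and is left with the cross-level difference of remainders. The divergence---and the gap---is in how that remainder is treated. In the paper the remainder is not a Taylor remainder but an exact closed form,
$$
R_{p+1}^{N,s}(D_{p,n}^s(\varphi)) \;=\; \frac{\eta_p^{N,s}(D_{p,n}^s(\varphi))}{\eta_p^{N,s}(G_p)}\,[\eta_p^{s}(G_p)-\eta_p^{N,s}(G_p)],
$$
a product of two fluctuations; the cross-level difference of these is decomposed exactly into four terms ($T_1,\dots,T_4$), each bounded by Cauchy--Schwarz pairing one factor that is $O(1/\sqrt{N})$ in $L^2$ (e.g.~$\eta_p^{N,l}(D_{p,n}^l(\varphi))$, via \cite[Proposition C.6.]{mlpf}) with one factor that is a \emph{centred coupled} fluctuation of size $O(\Delta_l^{\beta}/\sqrt{N})$ in $L^2$ (e.g.~$[\eta_p^{N,l}-\eta_p^{N,l-1}](G_p)-[\eta_p^{l}-\eta_p^{l-1}](G_p)$, via \cite[Theorem C.4., Corollary D.6.]{mlpf}); Lemma \ref{lem:av4} enters only for the term containing $D_{p,n}^l(\varphi)-D_{p,n}^{l-1}(\varphi)$. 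Your proposal instead performs a formal second-order expansion of the self-normalisation, keeping a $\tfrac1N$-times-variance leading term and dismissing ``contributions beyond second order'' as smaller order in $1/N$. This is where the proof breaks: the proposition asserts the bound $C\Delta_l^{\beta}\|\varphi\|_{\infty}/N$ for \emph{every} pair $(l,N)$, so any neglected term of size, say, $O(N^{-3/2})$ that does not itself carry a factor $\Delta_l^{\beta}$ violates the claim in the regime $\Delta_l^{\beta}\ll N^{-1/2}$ (fixed $N$, large $l$). You flag this yourself as ``the main obstacle'' but do not resolve it; the paper's closed-form remainder is precisely what makes the issue never arise.

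A second, related inaccuracy: after differencing, it is not true that ``every summand becomes a bounded factor multiplied by a single increment $D_{p,n}^l(\varphi)(x)-D_{p,n}^{l-1}(\varphi)(x')$''. The differencing also produces terms in which the \emph{same} test function is evaluated under the two marginal empirical measures---$[\eta_p^{N,l}-\eta_p^{N,l-1}](G_p)-[\eta_p^{l}-\eta_p^{l-1}](G_p)$, and differences of inverse normalising constants (the paper's $T_1$ and $T_4$). These are not $D$-increments, and the tools you list for the final step (Lemma \ref{lem:av4} plus the first-moment coupling bound $\mathbb{\check{E}}^N[\|X_p^{i,l}-X_p^{i,l-1}\|\wedge1]=O(\Delta_l^{\beta})$) only give such quantities a first-moment bound of order $\Delta_l^{\beta}$; multiplied by a companion factor of size $O(1/\sqrt{N})$ this yields $O(\Delta_l^{\beta}/\sqrt{N})$, short of the required $O(\Delta_l^{\beta}/N)$. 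What rescues the paper is that these quantities are centred, so their $L^2$ norm is $O(\Delta_l^{\beta}/\sqrt{N})$---they carry the coupling factor and a CLT factor simultaneously, which is exactly the content of the cited coupled $L^2$ results and has no substitute in your outline.
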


\begin{proof}
We have the following martingale plus remainder decomposition (see e.g.~\cite{mlpf_new,beskos,cpf_clt}):
\begin{eqnarray*}
[\eta_n^{N,l}(\varphi) - \eta_n^{N,l-1}](\varphi)-[\eta_n^l-\eta_n^{l-1}](\varphi) & = & \sum_{p=0}^n \frac{1}{\sqrt{N}}\{V_{p,n}^{N,l}(D_{p,n}^l(\varphi))-V_{p,n}^{N,l-1}(D_{p,n}^{l-1}(\varphi))\} \\ & & + \sum_{p=0}^{n-1} \{R_{p+1}^{N,l}(D_{p,n}^l(\varphi)) - R_{p+1}^{N,l-1}(D_{p,n}^{l-1}(\varphi))\}\\
V_{p,n}^{N,s}(\varphi) & = & \sqrt{N}[\eta_p^{N,s}-\Phi_p^s(\eta_{p-1}^{N,s})](\varphi) \\
R_{p+1}^{N,s}(D_{p,n}^s(\varphi))  & = & \frac{\eta_p^{N,s}(D_{p,n}^s(\varphi))}{\eta_p^{N,s}(G_p)}[\eta_p^{s}(G_p)-\eta_p^{N,s}(G_p)]
\end{eqnarray*}
with $s\in\{l,l-1\}$. Thus, we have
\begin{equation}\label{eq:mlpf_bias1}
\mathbb{\check{E}}^N[[\eta_n^{N,l}(\varphi) - \eta_n^{N,l-1}](\varphi)-[\eta_n^l-\eta_n^{l-1}](\varphi)] = \sum_{p=0}^{n-1}\mathbb{\check{E}}^N[R_{p+1}^{N,l}(D_{p,n}^l(\varphi)) - R_{p+1}^{N,l-1}(D_{p,n}^{l-1}(\varphi))].
\end{equation}
Then we have the decomposition
\begin{equation}\label{eq:mlpf_bias2}
R_{p+1}^{N,l}(D_{p,n}^l(\varphi)) - R_{p+1}^{N,l-1}(D_{p,n}^{l-1}(\varphi)) = \sum_{j=1}^4 T_j
\end{equation}
where
\begin{eqnarray*}
T_1 & = & -\frac{\eta_p^{N,l}(D_{p,n}^l(\varphi))}{\eta_p^{N,l}(G_p)}\Big(
[\eta_p^{N,l}-\eta_p^{N,l-1}](G_p) - [\eta_p^{l}-\eta_p^{l-1}](G_p)
\Big) \\
T_2 & = & \frac{[\eta_p^{l-1}-\eta_p^{N,l-1}](G_p)}{\eta_p^{N,l}(G_p)}\Big(
[\eta_p^{N,l}-\eta_p^{N,l-1}](D_{p,n}^l(\varphi)) - [\eta_p^{l}-\eta_p^{l-1}](D_{p,n}^l(\varphi)) 
\Big)\\
T_3 & = & \frac{[\eta_p^{l-1}-\eta_p^{N,l-1}](G_p)}{\eta_p^{N,l}(G_p)}[\eta_p^{N,l-1}-\eta_p^{N,l-1}](D_{p,n}^l(\varphi)-D_{p,n}^{l-1}(\varphi)) \\
T_4 & = & [\eta_p^{l-1}-\eta_p^{N,l-1}](G_p)\eta_p^{N,l-1}(D_{p,n}^{l-1}(\varphi))\Big(\frac{1}{\eta_p^{N,l}(G_p)}-\frac{1}{\eta_p^{N,l-1}(G_p)}\Big).
\end{eqnarray*}
We will bound the expectation for each of these terms in turn and then sum the bounds to conclude.

For $T_1$, via (A\ref{hyp:1}) and Cauchy-Schwarz, we have
$$
\mathbb{\check{E}}^N[T_1] \leq C\mathbb{\check{E}}^N[\eta_p^{N,l}(D_{p,n}^l(\varphi))^2]^{1/2}\mathbb{\check{E}}^N\Big[\Big(
[\eta_p^{N,l}-\eta_p^{N,l-1}](G_p) - [\eta_p^{l}-\eta_p^{l-1}](G_p)
\Big)^2\Big]^{1/2}.
$$
For the left hand term on the R.H.S., one can apply \cite[Proposition C.6.]{mlpf} and for the right hand term on the R.H.S.~\cite[Theorem C.4., Corollary D.6.]{mlpf} to yield
$$
\mathbb{\check{E}}^N[T_1] \leq \frac{C\|D_{p,n}^l(\varphi)\|_{\infty}\Delta_l^{\beta}}{N}.
$$
It easily follows from (A\ref{hyp:1}) that $\|D_{p,n}^l(\varphi)\|_{\infty}\leq C\|\varphi\|_{\infty}$ and hence that
\begin{equation}\label{eq:mlpf_bias3}
\mathbb{\check{E}}^N[T_1] \leq \frac{C\|\varphi\|_{\infty}\Delta_l^{\beta}}{N}.
\end{equation}

For $T_2$, one can follow almost an identical argument to $T_1$. First, note that $D_{p,n}^l(\varphi)\in\mathcal{B}_b(\mathsf{X})$ and second one can also use 
Lemma \ref{lem:av4} to verify that one would have a similar result to \cite[Theorem C.4., Corollary D.6.]{mlpf} when considering $D_{p,n}^l(\varphi)$. This yields
\begin{equation}\label{eq:mlpf_bias4}
\mathbb{\check{E}}^N[T_2] \leq \frac{C\Delta_l^{\beta}\|\varphi\|_{\infty}}{N}.
\end{equation}

For $T_3$, via (A\ref{hyp:1}) and Cauchy-Schwarz, we have
$$
\mathbb{\check{E}}^N[T_3] \leq C\mathbb{\check{E}}^N[[\eta_p^{l-1}-\eta_p^{N,l-1}](G_p)^2]^{1/2}\mathbb{\check{E}}^N[[\eta_p^{N,l-1}-\eta_p^{N,l-1}](D_{p,n}^l(\varphi)-D_{p,n}^{l-1}(\varphi))^2]^{1/2}.
$$
Applying \cite[Proposition C.6.]{mlpf} gives
$$
\mathbb{\check{E}}^N[T_3] \leq \frac{C\|D_{p,n}^l(\varphi)-D_{p,n}^{l-1}(\varphi)\|_{\infty}}{N}.
$$
Then by Lemma \ref{lem:av4}, \cite[Lemma D.2.]{mlpf}, \cite[eq.~(2.4)]{delm:01}, $\|D_{p,n}^l(\varphi)-D_{p,n}^{l-1}(\varphi)\|_{\infty}\leq C\Delta_l^{2\beta}\|\varphi\|_{\infty}$ and thus
\begin{equation}\label{eq:mlpf_bias5}
\mathbb{\check{E}}^N[T_3] \leq \frac{C\Delta_l^{2\beta}\|\varphi\|_{\infty}}{N}.
\end{equation}

For $T_4$, it follows that 
\begin{eqnarray}
\mathbb{\check{E}}^N[T_4] & = & T _5 + T_6 \label{eq:mlpf_bias10}\\
T_5 & = &  \mathbb{\check{E}}^N\Big[[\eta_p^{l-1}-\eta_p^{N,l-1}](G_p)\eta_p^{N,l-1}(D_{p,n}^{l-1}(\varphi))\Big(\frac{1}{\eta_p^{N,l}(G_p)}-\frac{1}{\eta_p^{N,l-1}(G_p)}-\Big(\frac{1}{\eta_p^{l}(G_p)}-\frac{1}{\eta_p^{l-1}(G_p)}\Big)\Big)\Big] \nonumber\\ 
T_6 & = &  \mathbb{\check{E}}^N\Big[[\eta_p^{l-1}-\eta_p^{N,l-1}](G_p)\eta_p^{N,l-1}(D_{p,n}^{l-1}(\varphi))\Big(\frac{1}{\eta_p^{l}(G_p)}-\frac{1}{\eta_p^{l-1}(G_p)}\Big)\Big]. \nonumber
\end{eqnarray}
We now need to control $T_5$ and $T_6$. Now, for $T_5$ applying Cauchy-Schwarz twice gives
$$
T_5 \leq \mathbb{\check{E}}^N[[\eta_p^{l-1}-\eta_p^{N,l-1}](G_p)^4]^{1/4}\mathbb{\check{E}}^N[\eta_p^{N,l-1}(D_{p,n}^{l-1}(\varphi))^4]^{1/4}\mathbb{\check{E}}^N\Big[\Big(\frac{1}{\eta_p^{N,l}(G_p)}-\frac{1}{\eta_p^{N,l-1}(G_p)}-\Big(\frac{1}{\eta_p^{l}(G_p)}-\frac{1}{\eta_p^{l-1}(G_p)}\Big)\Big)^{2}\Big]^{1/2}
$$
Applying \cite[Proposition C.6.]{mlpf} twice, gives
\begin{equation}\label{eq:mlpf_bias6}
T_5 \leq \frac{C\|\varphi\|_{\infty}}{N}\mathbb{\check{E}}^N\Big[\Big(\frac{1}{\eta_p^{N,l}(G_p)}-\frac{1}{\eta_p^{N,l-1}(G_p)}-\Big(\frac{1}{\eta_p^{l}(G_p)}-\frac{1}{\eta_p^{l-1}(G_p)}\Big)\Big)^{2}\Big]^{1/2}.
\end{equation}
Now, by Minkowski
\begin{equation}\label{eq:mlpf_bias7}
\mathbb{\check{E}}^N\Big[\Big(\frac{1}{\eta_p^{N,l}(G_p)}-\frac{1}{\eta_p^{N,l-1}(G_p)}-\Big(\frac{1}{\eta_p^{l}(G_p)}-\frac{1}{\eta_p^{l-1}(G_p)}\Big)\Big)^{2}\Big]^{1/2}
\leq  T_7 + T_8
\end{equation}
where
\begin{eqnarray*}
T_7 & = & \mathbb{\check{E}}^N\Big[\Big(\frac{\eta_p^{N,l-1}(G_p)-\eta_p^{N,l}(G_p)-(\eta_p^{l-1}(G_p)-\eta_p^{l}(G_p))}{\eta_p^{N,l}(G_p)\eta_p^{N,l-1}(G_p)}\Big)^{2}\Big]^{1/2} \\
T_8 & = & |\eta_p^{l-1}(G_p)-\eta_p^{l}(G_p)|\mathbb{\check{E}}^N\Big[\Big(
\frac{\eta_p^{l-1}(G_p)(\eta_p^{l}(G_p)-\eta_p^{N,l}(G_p)) + \eta_p^{N,l}(G_p)(\eta_p^{l-1}(G_p)-\eta_p^{N,l-1}(G_p))}{\eta_p^{N,l}(G_p)\eta_p^{N,l-1}(G_p)\eta_p^{l}(G_p)\eta_p^{l-1}(G_p)}\Big)^2\Big]^{1/2}.
\end{eqnarray*}
For $T_7$, by (A\ref{hyp:1}), \cite[Theorem C.4., Corollary D.6.]{mlpf}
$$
T_7 \leq \frac{C\Delta_l^{\beta}}{N^{1/2}}.
$$
For $T_8$, by (A\ref{hyp:1}), Minkowski, \cite[Proposition C.6.]{mlpf} (twice) and \cite[Lemma D.2.]{mlpf} (for $|\eta_p^{l-1}(G_p)-\eta_p^{l}(G_p)|$)
$$
T_8 \leq \frac{C\Delta_l^{2\beta}}{N^{1/2}}.
$$
Noting \eqref{eq:mlpf_bias6} and \eqref{eq:mlpf_bias7} we then have
\begin{equation}\label{eq:mlpf_bias8}
T_5 \leq \frac{C\Delta_l^{\beta}\|\varphi\|_{\infty}}{N^{3/2}}.
\end{equation}
For $T_6$, via Cauchy-Schwarz
$$
T_6 \leq \frac{|\eta_p^{l-1}(G_p)-\eta_p^{l}(G_p)|}{\eta_p^{l-1}(G_p)\eta_p^{l}(G_p)}\mathbb{\check{E}}^N[\eta_p^{l-1}-\eta_p^{N,l-1}](G_p)^2]^{1/2}\mathbb{\check{E}}^N[\eta_p^{N,l-1}(D_{p,n}^{l-1}(\varphi))^2]^{1/2}
$$
Applying \cite[Proposition C.6.]{mlpf} twice, \cite[Lemma D.2.]{mlpf} and (A\ref{hyp:1})
\begin{equation}\label{eq:mlpf_bias9}
T_6 \leq \frac{C\Delta_l^{\beta}\|\varphi\|_{\infty}}{N}.
\end{equation}
Noting \eqref{eq:mlpf_bias10}, along with \eqref{eq:mlpf_bias8} and  \eqref{eq:mlpf_bias9} gives
\begin{equation}\label{eq:mlpf_bias11}
\mathbb{\check{E}}^N[T_4] \leq \frac{C\Delta_l^{\beta}\|\varphi\|_{\infty}}{N}.
\end{equation}

The proof is concluded by noting \eqref{eq:mlpf_bias1},  \eqref{eq:mlpf_bias2} along with \eqref{eq:mlpf_bias3}, \eqref{eq:mlpf_bias4}, \eqref{eq:mlpf_bias5} and \eqref{eq:mlpf_bias11}.
\end{proof}

\begin{prop}\label{prop:cpf_prop_1}
Assume (A\ref{hyp:1}-\ref{hyp:2}). Then for any $n\in\mathbb{Z}^+$ there exists a $C<+\infty$ such that for any $(l,p)\in\mathbb{N}\times\mathbb{Z}^+$, $N_p>N_{p-1}>\cdots>N_0\geq 1$, $\varphi\in\mathcal{B}_b(\mathsf{X})\cap\textrm{\emph{Lip}}(\mathsf{X})$:,
$$
\mathbb{\check{E}}_c\Big[\Big([\eta_n^{N_{0:p},l}-\eta_n^{N_{0:p},l-1}](\varphi)-[\eta_n^{l}-\eta_n^{l-1}](\varphi)\Big)^2\Big] \leq \frac{C\Delta_l^{\beta}\|\varphi\|_{\infty}^2}{N_p}\Big(1+\frac{p^2}{N_p}\Big)
$$
where $\beta=\frac{1}{2}$ if $b$ is non-constant and $\beta=1$ if $b$ is constant.
\end{prop}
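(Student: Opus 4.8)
The plan is to mirror the proof of Proposition \ref{prop:pf_prop_1}, exploiting the independence of the $p+1$ coupled particle filters that constitute $\eta_n^{N_{0:p},s}$. Writing $w_q := (N_q - N_{q-1})/N_p$ (so that $\sum_{q=0}^p w_q = 1$ since $N_{-1}=0$) and setting $Z_q := [\eta_n^{N_q-N_{q-1},l}-\eta_n^{N_q-N_{q-1},l-1}](\varphi) - [\eta_n^l - \eta_n^{l-1}](\varphi)$ for the error of the $q$-th independent run, the quantity of interest is the single random variable $\sum_{q=0}^p w_q Z_q$. First I would expand its square and use that each $Z_q$ is measurable with respect to only the $q$-th group of samples, so that $Z_0,\dots,Z_p$ are independent under $\mathbb{\check{P}}_c$; the expectation then splits into diagonal second-moment terms $\sum_{q} w_q^2\, \mathbb{\check{E}}^{N_q-N_{q-1}}[Z_q^2]$ and off-diagonal product-of-means terms $\sum_{q\neq r} w_q w_r\, \mathbb{\check{E}}^{N_q-N_{q-1}}[Z_q]\,\mathbb{\check{E}}^{N_r-N_{r-1}}[Z_r]$, where each single-run mean is obtained by marginalizing $\mathbb{\check{P}}_c$ onto its $q$-th factor.

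The two remaining ingredients are single-run bounds. For the diagonal I would invoke the single-level $L^2$ rate for the coupled predictor increment, $\mathbb{\check{E}}^{N}[Z_q^2] \leq C\Delta_l^{\beta}\|\varphi\|_{\infty}^2/N$ (following \cite[Theorem C.4., Corollary D.6.]{mlpf}), with $\beta$ as in the statement and valid for $\varphi\in\mathcal{B}_b(\mathsf{X})\cap\textrm{Lip}(\mathsf{X})$. Substituting $N=N_q-N_{q-1}$ and $w_q^2=(N_q-N_{q-1})^2/N_p^2$, one power of $(N_q-N_{q-1})$ cancels and the sum telescopes, $\sum_{q=0}^p(N_q-N_{q-1})=N_p$, giving a diagonal contribution of exactly $C\Delta_l^{\beta}\|\varphi\|_{\infty}^2/N_p$. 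For the off-diagonal terms I would use the single-run bias bound of Proposition \ref{prop:cpf_res1}, which supplies $|\mathbb{\check{E}}^N[Z_q]| \leq C\Delta_l^{\beta/2}\|\varphi\|_{\infty}/N$; the crucial bookkeeping is that the bias exponent in Proposition \ref{prop:cpf_res1} is exactly half of $\beta$, so multiplying two such biases reproduces the full power $\Delta_l^{\beta}$. Each off-diagonal summand is then at most $C\Delta_l^{\beta}\|\varphi\|_{\infty}^2/N_p^2$, and since there are at most $(p+1)^2$ such pairs their total is bounded by $C\Delta_l^{\beta}\|\varphi\|_{\infty}^2\, p^2/N_p^2$.

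Adding the diagonal and off-diagonal contributions yields $C\Delta_l^{\beta}\|\varphi\|_{\infty}^2 N_p^{-1}(1+p^2/N_p)$, the claimed bound. The only genuinely delicate point — and the one I would flag as the main obstacle — is the matching of the powers of $\Delta_l$: the argument closes precisely because the single-run bias decays at rate $\Delta_l^{\beta/2}$ while the single-run second moment decays at rate $\Delta_l^{\beta}$, so that both the diagonal (one factor of second moment) and the off-diagonal (two factors of bias) contributions carry exactly $\Delta_l^{\beta}$. Establishing those two single-run rates is the substantive content, and is supplied by Proposition \ref{prop:cpf_res1} together with the cited $L^2$ estimates from \cite{mlpf}; everything else is the algebra of the telescoping weights, identical in form to Proposition \ref{prop:pf_prop_1}.
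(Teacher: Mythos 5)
Your proposal is correct and is precisely the paper's argument: the paper proves this proposition in one line by saying it is "essentially the same as Proposition \ref{prop:pf_prop_1}, except that one uses \cite[Theorem C.4., Corollary D.6.]{mlpf} and Proposition \ref{prop:cpf_res1} in place of \cite[Proposition 9.5.6]{delm:13}", which is exactly your decomposition into diagonal second-moment terms and off-diagonal products of single-run biases over the independent CPF runs. Your explicit bookkeeping — in particular noting that the bias exponent in Proposition \ref{prop:cpf_res1} is half of the $\beta$ in the statement, so that squared biases recover the full $\Delta_l^{\beta}$ — fills in the details the paper leaves implicit.
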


\begin{proof}
The proof is essentially the same as Proposition \ref{prop:pf_prop_1}, except that one uses \cite[Theorem C.4., Corollary D.6.]{mlpf} and Proposition \ref{prop:cpf_res1} in place of  \cite[Proposition 9.5.6]{delm:13}.
\end{proof}

\begin{proof}[Proof of Theorem \ref{theo:cpf_res}]
The proof follows by \cite[Lemma C.5.]{mlpf}, Proposition \ref{prop:cpf_prop_1}, (A\ref{hyp:1}), \cite[Proposition C.6.]{mlpf} and \cite[Lemma D.2.]{mlpf}. The details are omitted.
\end{proof}

\end{document}